\def\thesection{\arabic{section}}
\def\theequation{\thesection.\arabic{equation}}
\newcommand{\fa} {\forall}
\newcommand{\ds} {\displaystyle}
\newcommand{\al} {\alpha}
\newcommand{\de} {\delta}
\newcommand{\ga} {\gamma}
\newcommand{\Om} {\Omega}
\newcommand{\ra} {\rightarrow}
\newcommand{\De} {\Delta}
\newcommand{\la} {\lambda}
\newcommand{\noi} {\noindent}
\newcommand{\mb} {\mathbb}
\newcommand{\mc} {\mathcal}
\newcommand{\lra} {\longrightarrow}
\newcommand{\ld} {\langle}
\newcommand{\rd} {\rangle}
\def\theequation{\@arabic{\c@section}.\@arabic{\c@equation}}
\def\proof{\noindent{\textbf{Proof. }}}
\def\QED{\hfill {$\square$}\goodbreak \medskip}
\newtheorem{Theorem}{Theorem}[section]
\newtheorem{Lemma}[Theorem]{Lemma}
\newtheorem{Corollary}[Theorem]{Corollary}
\begin{document}
{\vspace{0.01in}

\title
{A Nehari manifold for non-local elliptic operator with
concave-convex non-linearities and sign-changing weight function}

\author{
{\bf  Sarika Goyal\footnote{email: sarika1.iitd@gmail.com}} and {\bf  K. Sreenadh\footnote{e-mail: sreenadh@gmail.com}}\\
{\small Department of Mathematics}, \\{\small Indian Institute of Technology Delhi}\\
{\small Hauz Khaz}, {\small New Delhi-16, India}\\
 }

\date{}

\maketitle

\begin{abstract}

In this article, we study the existence and multiplicity of
non-negative solutions of following $p$-fractional equation:
$$ \quad \left\{
\begin{array}{lr}\ds
 \quad  - 2\int_{\mb R^n}\frac{|u(y)-u(x)|^{p-2}(u(y)-u(x))}{|x-y|^{n+p\al}}
dxdy = \la h(x)|u|^{q-1}u+ b(x)|u|^{r-1} u \; \text{in}\;
\Om \\
 \quad \quad \quad \quad u \geq 0 \; \mbox{in}\; \Om,\quad u\in W^{\al,p}(\mb R^n),\\
 \quad \quad\quad \quad\quad u =0\quad\quad \text{on} \quad \mb R^n\setminus \Om
\end{array}
\right.
$$
where $\Om$ is a bounded domain in $\mb R^n$, $p\geq 2$, $n> p\al$,
$\al\in(0,1)$, $0< q<p-1 <r < \frac{np}{n-ps}-1$, $\la>0$ and
$h$, $b$ are sign changing smooth functions. We show the existence
of solutions by minimization on the suitable subset of Nehari
manifold using the fibering maps. We find that there exists $\la_0$
such that for $\la\in (0,\la_0)$, it has at least two solutions.

\medskip

\noi \textbf{Key words:} Non-local operator, Fractional
$p-$Laplacian, sign-changing weight function, Nehari manifold,
fibering maps.

\medskip

\noi \textit{2010 Mathematics Subject Classification:} 35J35, 35J60,
35R11

\end{abstract}

\bigskip
\vfill\eject

\section{Introduction}
\setcounter{equation}{0}
 We consider the following $p$-fractional Laplace equation
$$ \quad \left\{
\begin{array}{lr}\ds
 \quad  - 2\int_{\mb R^n}\frac{|u(y)-u(x)|^{p-2}(u(y)-u(x))}{|x-y|^{n+p\al}}
dxdy = \la h(x)|u|^{q-1}u+ b(x)|u|^{r-1} u \; \text{in}\;
\Om \\
 \quad \quad \quad \quad u \geq 0 \; \mbox{in}\; \Om,\quad u\in W^{\al,p}(\mb R^n),\\
 \quad \quad\quad \quad\quad u =0\quad\quad \text{on} \quad \mb R^n\setminus \Om
\end{array}
\right.
$$
where $\Om$ is a bounded domain in $\mb R^n$ with Lipschitz
boundary, $p\geq 2$, $n> p\al$, $0< q< p-1 <r<
\frac{np}{n-ps}-1$, $\la>0$, $h$ and $b$ are sign changing
smooth functions.

Recently a lot of attention is given to the study of fractional and
non-local operators of elliptic type due to concrete real world
applications in finance, thin obstacle problem, optimization,
quasi-geostrophic flow etc. Dirichlet boundary value problem in case
of fractional Laplacian with polynomial type nonlinearity using
variational methods is recently studied in
\cite{tan,mp,var,weak,ls,yu}. Also existence and multiplicity
results for nonlocal operators with convex-concave type
nonlinearity is shown in \cite{mul}. In case of square root of
Laplacian, existence and multiplicity results with sign-changing
weight function with nonlinearity of the type $\la u+ b(x)
|u|^{\ga-1}$ is studied in \cite{yu}. In \cite{yu}, author also used
the idea of Caffarelli and Silvestre \cite{cs}, which gives a
formulation of the fractional Laplacian through Dirichlet-Neumann
maps. Eigenvalue problem related to $p-$Laplacian is also
studied in \cite{gf,pt}.

In particular, for $s=1$, a lot of work has been done for
multiplicity of positive solutions of semilinear elliptic problems
with positive nonlinearities \cite{ag, ABC, AAP, TA}. Moreover
multiplicity results with polynomial type nonlinearity with
sign-changing weight functions using Nehari manifold and fibering
map analysis is also studied in many papers(see refs.\cite{TA,
GA,DP,WU,WU6,WU9,WU10,WUFI, COA}. To the best of our knowledge there is
no work for non-local operator with convex-concave type nonlinearity
and sign changing weight functions. Here we use variational approach on Nehari manifold and fibering map analysis to solve the problem
\eqref{eq01}. In this paper, our work is motivated by the
work of Servadei \cite{mp}, Brown and Wu in \cite{WUFI}.

The aim of this paper is to study the existence and multiplicity of non-negative solutions for
the following equation driven by non-local operator $\mc L_{K}$ with
convex-concave type nonlinearities
\begin{equation}\label{eq01}
 \quad \left.
\begin{array}{lr}
 \quad -\mc L_{K}u = \la h(x) |u|^{q-1}u + b(x) |u|^{r-1}u \; \text{in}\;
\Om \\
 \quad \quad \quad \quad u = 0 \; \mbox{in}\; \mb R^n \setminus\Om.\\
\end{array}
\quad \right\}
\end{equation}
The non-local operator $\mc L_{K}$ is defined
as
\[\mc L_K u(x)= 2 \int_{\mb R^n}|u(y)- u(x)|^{p-2} (u(y)-u(x))K(x-y)
dy\;\;\text{for\; all}\;\; x\in \mb R^n,\]
where $K :\mb R^n\setminus\{0\}\ra(0,\infty)$ satisfying:\\
$(i)\; mK \in L^1(\mb R^n),\;\text{where}\; m(x) = \min\{1, |x|^p\},$\\
$(ii)$ there exist $\theta>0$ and $\al\in(0,1)$ such that $K(x)\geq
\theta |x|^{-(n+p\al)},$\\
$(iii)\; K(x) = K(-x)$ for any $ x\in \mb R^n\setminus\{0\}$.

\noi In case of $K(x) = |x|^{-(n+p\al)}$, $\mc L_K$ becomes
fractional $p$-Laplacian and denoted by $(-\De)^{s}_{p}$.

\noi More precisely, we study the problem to find $u\in
W^{\al,p}(\mb R^n)$ such that for every $v\in W^{\al,p}(\mb R^n)$,
\begin{align*}
\int_{Q}|u(x)-u(y)|^{p-2}(u(x)-u(y))(v(x)-v(y)) &K(x-y)dxdy= \la \int_{\Om} h(x)|u|^{q-1}u v dx \\
&\quad + \int_{\Om} b(x) |u|^{r-1} uv dx
\end{align*}
\noi holds. In our setting, it represents the weak formulation of
\eqref{eq01} (for this, we assume $(iii)$). We have the following
existence result:
\begin{Theorem}\label{t1}
There exists $\la_0>0$ such that for $\la\in(0,\la_0)$, \eqref{eq01}
admits at least two non-negative solutions.
\end{Theorem}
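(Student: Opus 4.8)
The plan is to set up the standard variational framework on the fractional Sobolev space $X = W^{\al,p}(\mb R^n)$ equipped with the Gagliardo-type norm $\|u\|^p = \int_Q |u(x)-u(y)|^p K(x-y)\,dx\,dy$, and to study the associated energy functional
\[
J_\la(u) = \frac{1}{p}\|u\|^p - \frac{\la}{q+1}\int_\Om h(x)|u|^{q+1}\,dx - \frac{1}{r+1}\int_\Om b(x)|u|^{r+1}\,dx,
\]
whose critical points are exactly the weak solutions of \eqref{eq01}. Since $J_\la$ is not bounded below on $X$, I would restrict it to the Nehari manifold $\mc N_\la = \{u\in X\setminus\{0\} : \ld J_\la'(u),u\rd = 0\}$ and analyze it via the fibering maps $\phi_u(t) = J_\la(tu)$ for $t>0$. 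Noting that $tu\in\mc N_\la$ iff $\phi_u'(t)=0$, I would decompose $\mc N_\la = \mc N_\la^+\cup\mc N_\la^0\cup\mc N_\la^-$ according to the sign of $\phi_u''(1)$, corresponding to local minima, degenerate points, and local maxima of the fibering maps. Because $0<q<p-1<r$, each $\phi_u$ has the concave-convex shape dictated by the signs of $\int_\Om h|u|^{q+1}$ and $\int_\Om b|u|^{r+1}$, and a short case analysis of these signs determines how many positive critical points $\phi_u$ possesses and in which piece of $\mc N_\la$ they lie.

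The first technical step is to show that there exists $\la_0>0$ such that for $\la\in(0,\la_0)$ one has $\mc N_\la^0 = \emptyset$. This follows by assuming a nonzero $u\in\mc N_\la^0$, writing the two conditions $\ld J_\la'(u),u\rd=0$ and $\phi_u''(1)=0$ as a linear system in $\|u\|^p$ and $\int_\Om b|u|^{r+1}$, and deriving via the Sobolev embedding $X\hookrightarrow L^{q+1}(\Om),\,L^{r+1}(\Om)$ a lower bound on $\la$ that is violated once $\la$ is small; I would fix $\la_0$ accordingly. Consequently $\mc N_\la^+$ and $\mc N_\la^-$ are smooth submanifolds away from $0$, and every constrained critical point of $J_\la$ on $\mc N_\la$ is in fact a free critical point, since the Lagrange-multiplier relation forces the multiplier to vanish (this is exactly where $\mc N_\la^0=\emptyset$ is used). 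A parallel computation shows $J_\la$ is coercive and bounded below on $\mc N_\la$, using $q+1<p<r+1$ and the embeddings, so the infima $c^\pm = \inf_{\mc N_\la^\pm} J_\la$ are finite, with $c^+<0$.

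I would then produce the two solutions by minimization. For the first, take a minimizing sequence $\{u_k\}$ in $\mc N_\la^+$; by coercivity it is bounded, so up to a subsequence $u_k\rightharpoonup u_1$ weakly in $X$, and by the compact embedding $X\hookrightarrow L^{q+1}(\Om), L^{r+1}(\Om)$ (valid since $r+1<np/(n-p\al)$) the lower-order integrals converge. A careful argument then upgrades this to strong convergence $u_k\to u_1$ in $X$ and shows $u_1\in\mc N_\la^+$ realizes $c^+$, hence is a weak solution. The second solution is obtained by the same scheme over $\mc N_\la^-$, producing a minimizer $u_2$ at level $c^-$; since $c^+<0\le c^-$ and the two manifolds are disjoint, $u_1\neq u_2$. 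Finally, because the seminorm satisfies $\||u|\|\le\|u\|$ while the weight integrals depend only on $|u|$, one has $J_\la(|u|)\le J_\la(u)$, so both minimizers may be taken non-negative, giving two non-negative solutions as claimed.

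The main obstacle is the passage from weak to strong convergence of the minimizing sequences, which is genuinely nontrivial here because $\mc L_K$ is nonlinear and nonlocal. Unlike the classical ($p=2$, local) case, one cannot simply test with $u_k - u_1$ and invoke linearity; instead I expect to need the convexity/monotonicity of the $p$-fractional operator (an $(S_+)$-type property for $\mc L_K$) together with the strong convergence of the subcritical terms to force $\|u_k\|\to\|u_1\|$ and hence $u_k\to u_1$ in $X$. A secondary delicate point is verifying that the minimizers genuinely lie in the open pieces $\mc N_\la^\pm$ (and not on the missing $\mc N_\la^0$), which again rests on the smallness of $\la$ fixed in the first step.
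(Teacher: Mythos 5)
Your overall strategy coincides with the paper's: the same Nehari decomposition $\mc N_\la=\mc N_\la^+\cup\mc N_\la^0\cup\mc N_\la^-$ driven by the fibering maps, the same Lagrange-multiplier lemma, coercivity on $\mc N_\la$, and direct minimization on each of the two nondegenerate pieces. The differences are in three technical implementations. First, you obtain $\la_0$ by a contradiction argument on a hypothetical element of $\mc N_\la^0$; the paper instead proves a uniform lower bound $F_u(t_*)\ge\de$ for the ``$b$-part'' of the fibering map and deduces $\phi_u(t_*)\ge\de^{(q+1)/p}(\de^{(p-1-q)/p}-\la c)>0$ for $\la<\la_0$ (Lemma 3.2). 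The paper's version buys more: it yields not only $\mc N_\la^0=\emptyset$ but also the uniform bound $J_\la\ge\de_1>0$ on $\mc N_\la^-$ (Corollary 3.3), which is then used in Lemma 4.2 to guarantee $\int_\Om b|u_\la|^{r+1}dx>0$ for the weak limit, so that the projection $\tilde t(u_\la)u_\la\in\mc N_\la^-$ exists. Your route gives $\mc N_\la^0=\emptyset$ but you would still need to establish $\inf_{\mc N_\la^-}J_\la>0$ separately. Second, for the weak-to-strong convergence you invoke an $(S_+)$-type property of the $p$-fractional operator; the paper avoids this entirely by a fibering-map contradiction: if $\|u_\la\|^p<\liminf\|u_k\|^p$ then $\phi_{u_k}'(t_\la(u_\la))>0$ for large $k$, forcing $t_\la>1$ and $J_\la(t_\la u_\la)<J_\la(u_\la)<\inf_{\mc N_\la^+}J_\la$, a contradiction. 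Both work; the paper's is more elementary, yours is more robust. Third, for non-negativity you use $\||u|\|_{X_0}\le\|u\|_{X_0}$, while the paper truncates the nonlinearity ($f_+$), reruns the scheme for $J_\la^+$, and tests the critical-point equation with $u^-$ to conclude $\|u^-\|_{X_0}=0$.

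Two spots in your sketch are genuinely thin. The non-negativity step as stated is incomplete: $|u|$ need not lie on $\mc N_\la^\pm$ even when $u$ does, so ``both minimizers may be taken non-negative'' requires reprojecting $|u|$ onto the correct piece via its own fibering map and checking that the projected energy does not exceed $J_\la(u)$ (for $\mc N_\la^-$ this uses that $\phi_u(1)$ is the global maximum of $\phi_u$, which again rests on the positivity estimate of Lemma 3.2). This is standard but not automatic, and it is precisely the complication the paper's truncation argument is designed to sidestep. The second thin spot is the assertion $c^-\ge 0$: disjointness of $\mc N_\la^+$ and $\mc N_\la^-$ already gives $u_1\ne u_2$, so nothing is lost for the statement itself, but as noted above a strictly positive lower bound on $J_\la$ over $\mc N_\la^-$ is actually needed inside the existence proof on $\mc N_\la^-$, and your construction of $\la_0$ does not supply it for free.
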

Here $\la_0$ is the maximum of $\la$ such that for $0<\la<\la_0$,
the fibering map $t\mapsto J_\la(tu)$  has exactly two critical
points
for each $u\in B^+\cap H^+$.\\

\noi Now we define the linear space $X$ as follows:
\[X= \left\{u|\;u:\mb R^n \ra\mb R \;\text{is measurable},
u|_{\Om} \in L^p(\Om)\;
 and\;  \left(u(x)- u(y)\right)\sqrt[p]{K(x-y)}\in
L^p(Q)\right\}\]

\noi where $Q=\mb R^{2n}\setminus(\mc C\Om\times \mc C\Om)$ and
 $\mc C\Om := \mb R^n\setminus\Om$. Moreover
 \[X_0=\{u\in X: u=0 \;\mbox{a.e. in}\;\mb R^n\setminus \Om\}.\]
\noi The paper is organized as follows: In section 1, we study the
properties of the spaces $X$ and $X_0$. In section 2, we introduce
Nehari manifold and study the behavior of Nehari manifold by
carefully analyzing the associated fibering maps. Section 3 contains
the existence of nontrivial solutions in $\mc N_{\la}^{+}$ and $\mc
N_{\la}^{-}$.
\section{Functional Analytic Settings}
In this section we prove some properties and results related to the
space $X$ and $X_0$.
\begin{Lemma}\label{l1}
 The spaces $X$ and $X_0$ are non-empty as $C_{c}^{2}(\Om)\subseteq
 X_{0}$. Such type of spaces were introduced for $p=2$ by Servadei \cite{mp}.
 \end{Lemma}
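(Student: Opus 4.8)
The plan is to verify that every $u\in C_c^2(\Om)$ satisfies all three defining conditions of $X_0$; since $X_0\subseteq X$, this makes both spaces non-empty. Two of the conditions are immediate: such a $u$ is measurable and, being continuous with compact support, is bounded, so that $u|_\Om\in L^\infty(\Om)\subseteq L^p(\Om)$ because $\Om$ is bounded; moreover $u$ vanishes outside a compact subset of $\Om$, so $u=0$ a.e. on $\mb R^n\setminus\Om$ and the defining condition of $X_0$ holds automatically. The only substantive point is the finiteness of the Gagliardo-type quantity
\[
[u]^p:=\int_Q |u(x)-u(y)|^p K(x-y)\,dxdy.
\]

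To estimate $[u]^p$, I would first observe that on $\mc C\Om\times\mc C\Om$ one has $u(x)=u(y)=0$, so the integrand vanishes there and integration over $Q$ coincides with integration over all of $\mb R^{2n}$. Using the symmetry $(iii)$ and the substitution $z=x-y$, I rewrite
\[
[u]^p=\int_{\mb R^n}\Big(\int_{\mb R^n}|u(y+z)-u(y)|^p\,dy\Big)K(z)\,dz,
\]
and then bound the inner integral $I(z):=\int_{\mb R^n}|u(y+z)-u(y)|^p\,dy$ in two regimes according to the size of $|z|$.

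For $|z|\le 1$, I use that $u\in C^2_c(\Om)$ is Lipschitz, so $|u(y+z)-u(y)|\le \|\na u\|_\infty\,|z|$; since the integrand is supported in a fixed bounded neighbourhood of $\mathrm{supp}\,u$ whose measure is independent of $z$ for $|z|\le 1$, this yields $I(z)\le C_1|z|^p$. For $|z|>1$, the elementary inequality $|a-b|^p\le 2^{p-1}(|a|^p+|b|^p)$ together with translation invariance of Lebesgue measure gives $I(z)\le 2^{p}\|u\|_{L^p}^p=:C_2$. Recalling $m(z)=\min\{1,|z|^p\}$, both estimates combine into $I(z)\le C\,m(z)$ for all $z$, with $C=\max\{C_1,C_2\}$. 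Consequently
\[
[u]^p\le C\int_{\mb R^n}m(z)K(z)\,dz=C\,\|mK\|_{L^1(\mb R^n)}<\infty,
\]
where finiteness is precisely assumption $(i)$. Hence $(u(x)-u(y))\sqrt[p]{K(x-y)}\in L^p(Q)$, so $u\in X_0$.

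The only delicate step is the behaviour of $I(z)$ near $z=0$, where $K$ may be strongly singular; the $C^2$ (in fact Lipschitz) regularity of $u$ supplies the factor $|z|^p$ that exactly matches $m(z)$ on $\{|z|\le 1\}$ and tames the singularity, while the $L^p$ control handles $|z|>1$. Assumption $(i)$ is then exactly what is required to integrate against $K$, so no further hypotheses enter; note in particular that the lower bound $(ii)$ plays no role here, as it is needed only for the reverse direction (the embedding and coercivity estimates) rather than for membership in $X_0$.
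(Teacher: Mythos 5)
Your proof is correct and follows essentially the same route as the paper: both arguments reduce to the pointwise/integrated bound $|u(x)-u(y)|^p\lesssim m(x-y)$ coming from the Lipschitz estimate near the diagonal and boundedness (or the $L^p$ bound) far from it, and then invoke assumption $(i)$, $mK\in L^1(\mb R^n)$, to conclude finiteness of the Gagliardo-type seminorm. The only cosmetic difference is that you change variables to $z=x-y$ and split by $|z|\le 1$ versus $|z|>1$, whereas the paper keeps the double integral over $\Om\times\mb R^n$ and uses the single bound $|u(x)-u(y)|\le 2\|u\|_{C^1}\min\{1,|x-y|\}$ together with $|\Om|<\infty$.
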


\begin{proof}Consider
\begin{align*}
\int_{\mb R^{2n}}|u(x)-u(y)|^{p}K(x-y)dx dy &= \left(\int_{\Om\times
\Om}+2 \int_{\Om\times \Om^{c}}\right)|u(x)-u(y)|^{p}K(x-y)dx dy\\
&\leq2 \int_{\Om\times\mb R^{n}}|u(x)-u(y)|^{p}K(x-y)dx dy
\end{align*}
Also we have,
\[|u(x)-u(y)|\leq 2\|u\|_{L^{\infty}(\mb R^{n})}, \quad |u(x)-u(y)|\leq \|\nabla u\|_{L^{\infty}(\mb R^{n})}|x-y|.\]
Thus
\[|u(x)-u(y)|\leq 2 \|u\|_{C^{1}(\mb R^{n})}\min \{1,|x-y|\}.\]
Hence
\begin{align*}
\int_{\mb R^{2n}}|u(x)-u(y)|^{p}K(x-y)dx dy &\leq 2^{p+1} \|u\|_{C^{1}(\mb R^{n})} \int_{\Om\times\mb R^{n}}m(x-y)K(x-y)dx dy\\
&\leq 2^{p+1} \|u\|_{C^{1}(\mb R^{n})} |\Om|\int_{\mb
R^{n}}m(z)K(z)dz<\infty,
\end{align*}
as required.\QED
\end{proof}
\noi Now, we define $W^{\al,p}(\Om)$, the usual fractional Sobolev
space endowed with the norm
\begin{align}\label{2}
\|u\|_{W^{\al,p}(\Om)}=\|u\|_{L^p}+ \left(\int_{\Om\times\Om}
\frac{|u(x)-u(y)|^{p}}{|x-y|^{n+p\al}}dxdy \right)^{\frac 1p}.
\end{align}
To study fractional Sobolev space in details we refer \cite{hic}.
\noi Now we consider the linear space
\[X= \left\{u|\;u:\mb R^n \ra\mb R \;\text{is measurable},
u|_{\Om} \in L^p(\Om)\;
 and\;  \left(u(x)- u(y)\right)\sqrt[p]{K(x-y)}\in
L^p(Q)\right\}\]
\noi where $Q=\mb R^{2n}\setminus(\mc C\Om\times
\mc C\Om)$ and
 $\mc C\Om := \mb R^n\setminus\Om$. The space X is normed linear space endowed with the norm
\begin{align}\label{1}
 \|u\|_X = \|u\|_{L^p(\Om)} +\left( \int_{Q}|u(x)- u(y)|^{p}K(x-y)dx
dy\right)^{\frac1p}.
\end{align}
It is easy to check that $\|.\|_X$ is a norm on $X$. For this we
first show that if $\|u\|_X=0$ then $u=0$ a.e. in $\mb R^n$. Indeed,
if $\|u\|_X=0$ then $\|u\|_{L^{p}(\Om)}=0$ which implies that
\begin{align}\label{e1}
u=0\;\mbox{ a.e in}\; \Om
\end{align}
and $ \int_{Q}|u(x)- u(y)|^{p}K(x-y)dx dy=0$. Thus $u(x)= u(y)$ a.e
in $Q$ means $u$ is constant in $Q$. Hence by \eqref{e1}, we have
$u=0$ a.e. in $\mb R^n$. Also triangle inequality follows from the
inequality $|a+b|^{p}\leq |a+b|^{p-1}|a|+|a+b|^{p-1}|b|$ $\fa$
$a,b\in \mb R$, $p\geq 1$ and H\"{o}lders inequality. moreover other
properties of norms are obvious. \QED
 We note that, if $K(x)=|x|^{n+p\al}$ then
norms in \eqref{2} and \eqref{1} are not same, because $\Om\times
\Om$ is strictly contained in $Q$.
\begin{Lemma}\label{l2}
Let $K :\mb R^n\setminus \{0\}\ra (0,\infty)$ be a function
satisfying $(ii)$. Then
\begin{enumerate}
\item[1.] If $u\in X$ then $u\in W^{\al, p}(\Om)$ and moreover
\[\|u\|_{W^{\al,p}(\Om)}\leq c(\theta) \|u\|_{X}.\]
\item[2.] If $u\in X_0$ then $u\in W^{\al, p}(\mb R^n)$ and moreover
\[\|u\|_{W^{\al,p}(\Om)}\leq \|u\|_{W^{\al,p}(\mb R^n)} \leq c(\theta) \|u\|_{X},\]
\end{enumerate}
 In both the cases $c(\theta)=\max\{1,
\theta^{-1/p}\}$, where $\theta$ is given in $(ii)$.
\end{Lemma}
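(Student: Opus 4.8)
The plan is to compare the two seminorms pointwise using only the lower bound $(ii)$, namely $K(x-y)\geq\theta\,|x-y|^{-(n+p\al)}$, equivalently $|x-y|^{-(n+p\al)}\leq\theta^{-1}K(x-y)$. First I would record the elementary set inclusion $\Om\times\Om\subseteq Q$: since $Q=\mb R^{2n}\setminus(\mc C\Om\times\mc C\Om)$ and a pair $(x,y)\in\Om\times\Om$ can never lie in $\mc C\Om\times\mc C\Om$, the Gagliardo integration domain of $W^{\al,p}(\Om)$ sits inside that of the $X$-seminorm. Combining this inclusion with the pointwise estimate from $(ii)$ gives, for any $u\in X$,
\[
\int_{\Om\times\Om}\frac{|u(x)-u(y)|^{p}}{|x-y|^{n+p\al}}\,dx\,dy
\leq\theta^{-1}\int_{\Om\times\Om}|u(x)-u(y)|^{p}K(x-y)\,dx\,dy
\leq\theta^{-1}\int_{Q}|u(x)-u(y)|^{p}K(x-y)\,dx\,dy .
\]
Taking $p$-th roots and adding $\|u\|_{L^p(\Om)}$ to both sides, the seminorm factor $\theta^{-1/p}$ and the identity factor $1$ on the $L^p$ term are dominated by their maximum, which yields $\|u\|_{W^{\al,p}(\Om)}\leq\max\{1,\theta^{-1/p}\}\,\|u\|_X=c(\theta)\|u\|_X$. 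This proves part~1 and in particular shows $u\in W^{\al,p}(\Om)$, since the right-hand side is finite.

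For part~2 the decisive new input is that $u\in X_0$ vanishes a.e.\ on $\mc C\Om=\mb R^n\setminus\Om$. I would exploit this to show that the full-space Gagliardo integral already coincides with the integral over $Q$: writing $\mb R^{2n}=Q\cup(\mc C\Om\times\mc C\Om)$ as a disjoint (up to null sets) union, the integrand $|u(x)-u(y)|^{p}$ is identically zero on $\mc C\Om\times\mc C\Om$ because both $u(x)$ and $u(y)$ vanish there. Hence
\[
\int_{\mb R^{2n}}\frac{|u(x)-u(y)|^{p}}{|x-y|^{n+p\al}}\,dx\,dy
=\int_{Q}\frac{|u(x)-u(y)|^{p}}{|x-y|^{n+p\al}}\,dx\,dy
\leq\theta^{-1}\int_{Q}|u(x)-u(y)|^{p}K(x-y)\,dx\,dy ,
\]
the last step again using $(ii)$. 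Since $u=0$ outside $\Om$ also gives $\|u\|_{L^p(\mb R^n)}=\|u\|_{L^p(\Om)}$, the same $\max\{1,\theta^{-1/p}\}$ bookkeeping delivers $\|u\|_{W^{\al,p}(\mb R^n)}\leq c(\theta)\|u\|_X$. The remaining inequality $\|u\|_{W^{\al,p}(\Om)}\leq\|u\|_{W^{\al,p}(\mb R^n)}$ is immediate, since $\Om\times\Om\subseteq\mb R^{2n}$ enlarges the seminorm and $\|u\|_{L^p(\Om)}\leq\|u\|_{L^p(\mb R^n)}$.

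The computations here are routine; the only point requiring genuine care is the reduction in part~2 of the integral over $\mb R^{2n}$ to the integral over $Q$. One must verify that $\mb R^{2n}\setminus Q=\mc C\Om\times\mc C\Om$ exactly, so that the discarded region is precisely where $u(x)=u(y)=0$ and the integrand vanishes; this is what makes the estimate an equality rather than merely an inequality, and it is also the structural reason the space $X_0$ is the natural domain for the non-local operator. Everything else reduces to monotonicity of the integral under the inclusion $\Om\times\Om\subseteq Q$ and the single pointwise comparison furnished by $(ii)$.
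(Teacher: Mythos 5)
Your proposal is correct and follows essentially the same route as the paper: part 1 uses the inclusion $\Om\times\Om\subseteq Q$ together with the pointwise bound from $(ii)$, and part 2 reduces the $\mb R^{2n}$ Gagliardo integral to the integral over $Q$ because $u$ vanishes on $\mc C\Om\times\mc C\Om$. You actually spell out the justification for that reduction more explicitly than the paper does, but the argument is the same.
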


\begin{proof}
\begin{enumerate}
\item[1.] Let $u\in X$, then by $(ii)$ we have
\begin{align*}
\int_{\Om\times\Om}\frac{|u(x)-u(y)|^{p}}{|x-y|^{n+p\al}} dx dy
&\leq \frac{1}{\theta}\int_{\Om\times\Om} |u(x)-u(y)|^{p} K(x-y) dx
dy\\
& \leq \frac{1}{\theta}\int_{Q} |u(x)-u(y)|^{p} K(x-y) dx dy<\infty.
\end{align*}
Thus
\begin{align*}
\|u\|_{W^{\al,p}}=\|u\|_{p}+
\left(\int_{\Om\times\Om}\frac{|u(x)-u(y)|^{p}}{|x-y|^{n+p\al}} dx
dy\right)^{\frac{1}{p}}\leq c(\theta) \|u\|_X.
\end{align*}
\item[2.] Let $u\in X_0$ then $u=0$ on $\mb R^n\setminus \Om$. So
$\|u\|_{L^{2}(\mb R^n)}= \|u\|_{L^{2}(\Om)}$. Hence
\begin{align*}
\int_{\mb R^{2n}}\frac{|u(x)-u(y)|^{p}}{|x-y|^{n+p\al}} dx dy &=
\int_{Q} \frac{|u(x)-u(y)|^{p}}{|x-y|^{n+p\al}} dx dy\\
& \leq \frac{1}{\theta}\int_{Q} |u(x)-u(y)|^{p} K(x-y) dx dy<
+\infty,
\end{align*}
\end{enumerate}
as required.\QED
\end{proof}
\begin{Lemma}\label{l3}
Let $K :\mb R^n\setminus \{0\}\ra (0,\infty)$ be a function
satisfying $(ii)$. Then there exists a positive constant $c$
depending on $n$ and $\al$ such that for every $u\in X_0$, we have
\[\|u\|_{L^{p^*}(\Om)}^{p}= \|u\|_{L^{p^*}(\mb R^n)}^{p}\leq c\int_{\mb R^{2n}}\frac{|u(x)-u(y)|^{p}}{|x-y|^{n+p\al}} dx dy, \]
where $p^*=\frac{np}{n-ps}$ is fractional critical Sobolev exponent.
\end{Lemma}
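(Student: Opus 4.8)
The plan is to deduce this continuous embedding from the corresponding fractional Sobolev inequality on all of $\mb R^n$, exploiting the hypothesis $u\in X_0$ at two points. First I would dispose of the stated equality: since $u\in X_0$ means $u=0$ a.e.\ in $\mb R^n\setminus\Om$, the integrand $|u|^{p^*}$ vanishes outside $\Om$, so $\|u\|_{L^{p^*}(\Om)}=\|u\|_{L^{p^*}(\mb R^n)}$ trivially, and it remains only to bound the right-hand norm by the Gagliardo seminorm. By Lemma \ref{l2}(2), every $u\in X_0$ belongs to $W^{\al,p}(\mb R^n)$, so the double integral $\int_{\mb R^{2n}}\frac{|u(x)-u(y)|^p}{|x-y|^{n+p\al}}\,dx\,dy$ is finite; the target inequality is then exactly the statement that the $W^{\al,p}(\mb R^n)$ seminorm controls the $L^{p^*}$ norm, with the critical exponent $p^*=\frac{np}{n-p\al}$ dictated by scaling.

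The heart of the matter is therefore the fractional Sobolev inequality for the seminorm, valid for all compactly supported measurable functions on $\mb R^n$ when $p\al<n$ (the standing assumption $n>p\al$). This is the substantive analytic input, available in \cite{hic}, and I expect it to be the main obstacle in a self-contained treatment. The cleanest route is a dyadic level-set decomposition: writing $|u|=\sum_k u_k$, where $u_k$ is the truncation of $|u|$ to the annular range $[2^k,2^{k+1})$, one estimates $\|u\|_{L^{p^*}}^{p}$ by a sum over $k$ of powers of the Lebesgue measures of the superlevel sets $\{|u|>2^k\}$. Each such measure is then controlled by the local oscillation of $u$ through an averaging (isoperimetric-type) estimate that produces the Gagliardo kernel $|x-y|^{-(n+p\al)}$, and summing the resulting geometric series yields the embedding constant $c=c(n,\al)$. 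The delicate point is keeping the constant independent of $u$ and tracking the interplay of the exponents $p$, $p^*$, and $\al$ so that the telescoping sum converges precisely when $p\al<n$.

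Finally, to pass from the class of smooth (or bounded, compactly supported) functions on which the level-set argument is carried out to an arbitrary $u\in X_0$, I would use the density of $C_c^\infty(\Om)$ in $X_0$ together with the finiteness of the seminorm from Lemma \ref{l2}, invoking Fatou's lemma on the left-hand side to preserve the inequality in the limit. Since the constant $c$ depends only on $n$ and $\al$, it is stable under this approximation, and combining the three steps gives the asserted bound $\|u\|_{L^{p^*}(\Om)}^{p}=\|u\|_{L^{p^*}(\mb R^n)}^{p}\leq c\int_{\mb R^{2n}}\frac{|u(x)-u(y)|^{p}}{|x-y|^{n+p\al}}\,dx\,dy$.
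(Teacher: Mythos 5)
Your proposal is correct and follows essentially the same route as the paper: both reduce the claim to the membership $u\in W^{\al,p}(\mb R^n)$ via Lemma \ref{l2}(2) and then invoke the fractional Sobolev embedding $W^{\al,p}(\mb R^n)\hookrightarrow L^{p^*}(\mb R^n)$ from \cite{hic}, with the norm equality being immediate from $u=0$ a.e.\ outside $\Om$. The extra material you supply (the dyadic level-set proof of the embedding and the density/Fatou limiting step) is a sound expansion of what the paper simply cites as a black box, and is not needed beyond noting that functions in $X_0$ are compactly supported so the cited inequality applies directly.
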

\begin{proof}
Let $u\in X_0$ then by Lemma \ref{l2}, $u\in W^{\al,p}(\mb R^n)$ and
also we know that $W^{\al,p}(\mb R^n)\hookrightarrow L^{p^*}(\mb
R^n)$ (see \cite{hic}). Then we have,
\[\|u\|_{L^{p^*}(\Om)}^{p}= \|u\|_{L^{p^*}(\mb R^n)}^{p}\leq c\int_{\mb R^{2n}}\frac{|u(x)-u(y)|^{p}}{|x-y|^{n+p\al}} dx dy,  \]
and hence the result. \QED
\end{proof}
 \begin{Lemma}\label{l4}
Let $K :\mb R^n\setminus \{0\}\ra (0,\infty)$ be a function
satisfying $(ii)$. Then there exists $C>1$, depending only on $n$,
$\al$, $p$, $\theta$ and $\Om$ such that for any $u\in X_0$,
\[\int_{Q} |u(x)-u(y)|^{p} K(x-y) dx dy\leq \|u\|_{X}^{p} \leq C\int_{Q} |u(x)-u(y)|^{p} K(x-y) dx dy.\]
i.e. \begin{align}\label{ee1} \|u\|_{X_0}^{p} = \int_{Q}
|u(x)-u(y)|^{p} K(x-y) dx dy
\end{align}
 is a norm on $X_0$ and equivalent to the norm on $X$.
\end{Lemma}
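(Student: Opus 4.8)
The plan is to establish the claimed two-sided bound and then read off from it that $[u]^p := \int_{Q} |u(x)-u(y)|^{p} K(x-y)\,dx\,dy$ defines a norm on $X_0$ equivalent to $\|\cdot\|_X$. The left-hand inequality is immediate: since $\|u\|_X = \|u\|_{L^p(\Om)} + [u]$ with both summands non-negative and $p\geq 1$, we have $\|u\|_X^{p} = (\|u\|_{L^p(\Om)} + [u])^{p} \geq [u]^{p}$, so no constant is needed there. All the content lies in the right-hand inequality, which amounts to a Poincar\'e-type estimate controlling $\|u\|_{L^p(\Om)}$ by $[u]$ alone. This is possible precisely because $u\in X_0$ vanishes outside the bounded set $\Om$.

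First I would convert the weighted seminorm into the Gagliardo seminorm using hypothesis $(ii)$: for $u\in X_0$ the fact that $u=0$ on $\mb R^n\setminus\Om$ gives
\[\int_{\mb R^{2n}} \frac{|u(x)-u(y)|^{p}}{|x-y|^{n+p\al}}\,dx\,dy = \int_{Q} \frac{|u(x)-u(y)|^{p}}{|x-y|^{n+p\al}}\,dx\,dy \leq \frac{1}{\theta}\,[u]^{p},\]
exactly as in the proof of Lemma \ref{l2}. Next I would invoke the fractional Sobolev embedding of Lemma \ref{l3} to obtain $\|u\|_{L^{p^*}(\Om)}^{p} \leq \frac{c}{\theta}\,[u]^{p}$. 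Finally, since $\Om$ is bounded and $p<p^*$, H\"older's inequality yields $\|u\|_{L^p(\Om)} \leq |\Om|^{\frac1p-\frac{1}{p^*}}\,\|u\|_{L^{p^*}(\Om)}$, so that $\|u\|_{L^p(\Om)}^{p} \leq C_1\,[u]^{p}$ with $C_1$ depending only on $n,\al,p,\theta$ and $\Om$.

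Combining these, $\|u\|_X = \|u\|_{L^p(\Om)} + [u] \leq (C_1^{1/p}+1)\,[u]$, hence $\|u\|_X^{p} \leq C\,[u]^{p}$ with $C=(C_1^{1/p}+1)^{p}>1$, which is the right-hand inequality. The equivalence of norms follows at once, and with it the norm property of $[\cdot]$: homogeneity and the triangle inequality come from Minkowski's inequality in $L^p(Q; K\,dx\,dy)$, while positive definiteness is the only non-obvious axiom and is supplied by the inequality itself, since $[u]=0$ forces $\|u\|_X=0$ and therefore $u=0$ a.e. in $\mb R^n$, as already observed when verifying that $\|\cdot\|_X$ is a norm.

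I expect the main obstacle to be the Poincar\'e step, i.e. justifying that $\|u\|_{L^p(\Om)}$ is dominated by the seminorm with a constant independent of $u$; this is exactly where boundedness of $\Om$ (through the H\"older estimate) and the embedding $W^{\al,p}\hookrightarrow L^{p^*}$ are essential, and it is the only place the hypothesis $u\in X_0$ (rather than merely $u\in X$) is used. Everything else is routine bookkeeping of the constants.
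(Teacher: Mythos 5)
Your proposal is correct and follows essentially the same route as the paper: the left inequality is immediate, and the right one is obtained by chaining hypothesis $(ii)$, the Sobolev embedding of Lemma \ref{l3}, and H\"older's inequality on the bounded domain $\Om$ to control $\|u\|_{L^p(\Om)}$ by the seminorm. The only cosmetic differences are that the paper expands $\|u\|_X^p$ with the convexity bound $(a+b)^p\leq 2^{p-1}(a^p+b^p)$ before estimating (yielding the constant $2^{p-1}\bigl(c|\Om|^{1-p/p^*}/\theta+1\bigr)$ rather than your $(C_1^{1/p}+1)^p$), and that the paper verifies positive definiteness directly ($u$ constant on $Q$ and zero outside $\Om$) whereas you deduce it from the just-proven inequality; both are fine.
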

\begin{proof}
Clearly $\|u\|_{X}^{p}\geq \int_{Q} |u(x)-u(y)|^{p} K(x-y) dx dy$
and moreover, By Lemma \ref{l3}, $(ii)$ and using the embedding $
L^{p^*}(\Om)\hookrightarrow L^{p}(\Om)$, where
$p^*=\frac{np}{n-ps}$, we get
\begin{align*}
\|u\|_{X}^{p}&= \left(\|u\|_{p} + \left(\int_{Q}
|u(x)-u(y)|^{p} K(x-y) dx dy \right)^{1/p}\right)^p \\
&\leq 2^{p-1} \|u\|_{p}^{p} + 2^{p-1} \int_{Q} |u(x)-u(y)|^{p} K(x-y) dx dy\\
&\leq  2^{p-1} |\Om|^{1-\frac{p}{p^*}} \|u\|_{p^*}^{p} + 2^{p-1} \int_{Q} |u(x)-u(y)|^{p} K(x-y) dx dy\\
&\leq  2^{p-1} \;c|\Om|^{1-\frac{p}{p^*}} \int_{\mb R^{2n}}\frac{|u(x)-u(y)|^{p}}{|x-y|^{n+p\al}}dx dy + 2^{p-1} \int_{Q} |u(x)-u(y)|^{p} K(x-y) dx dy\\
&\leq 2^{p-1}
\left(\frac{c|\Om|^{1-\frac{p}{p^*}}}{\theta}+1\right)\int_{Q}
|u(x)-u(y)|^{p} K(x-y) dx dy\\
&= C \int_{Q} |u(x)-u(y)|^{p} K(x-y) dx dy,
\end{align*}
where $C>1$ as required. Now  we show that \eqref{ee1} is norm on
$X_0$. For this we need only to show that if $\|u\|_{X_0}=0$ then
$u=0$ a.e. in $\mb R^n$. Indeed, if $\|u\|_{X_0}=0$ then
 $ \int_{Q}|u(x)- u(y)|^{p}K(x-y)dx dy=0$ implies that $u(x)=
u(y)$ a.e in $Q$. Therefore, $u$ is constant in $Q$ and hence $u=c\in \mb R$
a.e in $\mb R^n$ and by definition of $X_0$, we have $u=0$ on $\mb
R^n\setminus \Om$. Thus $u=0$ a.e. in $\mb R^n$.\QED

\end{proof}
\begin{Lemma}
The space $(X_0, \|.\|_{X_0})$ is a reflexive Banach space.
\end{Lemma}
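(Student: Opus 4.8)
The plan is to realize $(X_0,\|\cdot\|_{X_0})$ isometrically as a closed subspace of $L^p(Q)$ and then transfer the reflexivity of $L^p(Q)$ back to $X_0$. To this end I would introduce the linear map $T:X_0\to L^p(Q)$ defined by
\[
(Tu)(x,y) = \big(u(x)-u(y)\big)\sqrt[p]{K(x-y)}.
\]
By the very definition of the norm in \eqref{ee1} one has $\|Tu\|_{L^p(Q)}^{p}=\int_{Q}|u(x)-u(y)|^{p}K(x-y)\,dx\,dy=\|u\|_{X_0}^{p}$, so $T$ is a linear isometry of $X_0$ into $L^p(Q)$.

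The first substantive step is to prove that $(X_0,\|\cdot\|_{X_0})$ is complete. Let $\{u_n\}$ be a Cauchy sequence in $X_0$. By Lemma \ref{l4} it is also Cauchy for $\|\cdot\|_X$, and then Lemmas \ref{l2} and \ref{l3} show that $\{u_n\}$ is Cauchy in $L^p(\Om)$; hence $u_n\to u$ in $L^p(\Om)$ for some $u$, and along a subsequence $u_n\to u$ a.e. in $\Om$, hence a.e. in $\mb R^n$ since all the functions vanish outside $\Om$. On the other hand $\{Tu_n\}$ is Cauchy in $L^p(Q)$, so $Tu_n\to g$ in $L^p(Q)$ for some $g$, and along a further subsequence $Tu_n\to g$ a.e. in $Q$. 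Comparing the two pointwise limits yields $g(x,y)=(u(x)-u(y))\sqrt[p]{K(x-y)}$ a.e. in $Q$, which shows $u\in X_0$ and $\|u_n-u\|_{X_0}=\|Tu_n-Tu\|_{L^p(Q)}\to0$. Thus $X_0$ is a Banach space.

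For reflexivity, observe that the completeness of $X_0$ together with the isometry property of $T$ forces $T(X_0)$ to be a closed subspace of $L^p(Q)$. Since $p\geq2$ gives $1<p<\infty$, the space $L^p(Q)$ is reflexive, and a closed subspace of a reflexive Banach space is itself reflexive. Hence $T(X_0)$ is reflexive, and because $T:X_0\to T(X_0)$ is a surjective linear isometry, $X_0$ is reflexive as well.

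The main obstacle is the completeness step, and specifically the identification of the $L^p(Q)$-limit $g$ with the difference quotient of the genuine limit $u$. The embeddings furnished by Lemmas \ref{l2} and \ref{l3} are precisely what upgrade $\|\cdot\|_{X_0}$-Cauchyness to convergence in $L^p(\Om)$, and hence to pointwise a.e.\ convergence of $\{u_n\}$; once both $\{u_n\}$ and $\{Tu_n\}$ converge almost everywhere along a common subsequence, matching the two limits is routine. (Alternatively, one could note that $T$ embeds $X_0$ isometrically into the uniformly convex space $L^p(Q)$ and invoke the Milman--Pettis theorem, but the closed-subspace argument above is more self-contained.)
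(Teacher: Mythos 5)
Your argument is correct, and while the completeness half runs parallel to the paper's, the reflexivity half takes a genuinely different route. The paper also starts from a Cauchy sequence, uses Lemma \ref{l3} to get convergence in $L^{p}(\Om)$ and a.e.\ convergence of a subsequence, and then appeals (somewhat tersely) to Fatou's lemma plus the Cauchy property to identify the limit and conclude $\|u_k-u\|_{X_0}\to 0$; your device of the isometry $T:X_0\to L^p(Q)$, $(Tu)(x,y)=(u(x)-u(y))\sqrt[p]{K(x-y)}$, makes that identification step cleaner, since you get an honest $L^p(Q)$-limit $g$ of $Tu_k$ and match it pointwise with $Tu$ along a common subsequence. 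For reflexivity the paper instead asserts that $X_0$ is a closed subspace of the reflexive space $W^{\al,p}(\mb R^n)$. That step is less robust than yours: Lemma \ref{l2} only gives the one-sided bound $\|u\|_{W^{\al,p}(\mb R^n)}\leq c(\theta)\|u\|_{X}$, coming from the lower bound $K(x)\geq\theta|x|^{-(n+p\al)}$ in hypothesis $(ii)$; with no upper bound on $K$ assumed, the norms $\|\cdot\|_{X_0}$ and $\|\cdot\|_{W^{\al,p}(\mb R^n)}$ need not be equivalent on $X_0$, so ``closed subspace of $W^{\al,p}(\mb R^n)$'' requires extra justification. Your argument sidesteps this entirely: $T$ is an isometry by the very definition \eqref{ee1}, completeness of $X_0$ forces $T(X_0)$ to be closed in the reflexive space $L^p(Q)$ ($1<p<\infty$), and reflexivity transfers back through the isometric isomorphism. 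In short, both proofs establish completeness the same way, but your embedding into $L^p(Q)$ gives a self-contained and, under the stated hypotheses on $K$, more watertight proof of reflexivity.
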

\begin{proof}
Let $\{u_k\}$ be a Cauchy sequence in $X_0$. Then by Lemma \ref{l3},
$\{u_k\}$ is Cauchy sequence in $L^{p}(\Om)$ and so $\{u_k\}$ has a
convergent subsequence. Thus we assume $u_k\ra u$ strongly in
$L^{p}(\Om)$. Since $u_k=0$ in $\mb R^n\setminus \Om$, we define
$u=0$ in $\mb R^n\setminus \Om$ and then $u_k\ra u$ strongly in
$L^{p}(\mb R^n)$ as $k\ra \infty$. Thus there exists a subsequence
still denoted by $u_{k}$ such that $u_{k}\ra u$ a.e. in $\mb R^n$.
Therefore one can easily show by Fatou's Lemma and using the fact that
$u_k$ is a Cauchy sequence that $u\in X_0$. Moreover, using the
same fact one can verify that $\|u_k -u\|_{X_0}\ra 0$ as $k\ra
\infty$. Hence $X_0$ is a Banach
space. Reflexivity of $X_0$ follows from the fact that $X_0$ is a
closed subspace of reflexive Banach space $W^{\al,p}(\mb R^n)$.\QED
\end{proof}
\noi Thus we have
 \[ X_0 = \{u\in X : u = 0 \;\text{a.e. in}\; \mb R^n\setminus \Om\}\]
with the norm
\begin{align}\label{01}
 \|u\|_{X _0}=\left(\int_{Q}|u(x)-u(y)|^{p}K(x-y)dx dy\right)^{\frac1p}
\end{align}
is a reflexive Banach space. Note that the norm $\|.\|_{X_0}$
involves the interaction between $\Om$ and $\mb R^n\setminus\Om$.
\begin{Lemma}
Let $K :\mb R^n\setminus \{0\}\ra (0,\infty)$ be a function
satisfying $(ii)$ and let $\{u_k\}$ be a bounded sequence in $X_0$.
Then, there exists $u\in L^{m}(\mb R^n)$ such that up to a
subsequence, $u_k\ra u$ in $L^{m}(\mb R^n)$ as $k\ra \infty$ for any
$m\in [1, p^*)$.
\end{Lemma}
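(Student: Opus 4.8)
The plan is to reduce the statement to the fractional Rellich--Kondrachov compactness theorem for the Sobolev space $W^{\al,p}(\Om)$ on the bounded Lipschitz domain $\Om$. First I would observe that, since $\{u_k\}$ is bounded in $X_0$, Lemma \ref{l4} identifies $\int_{Q}|u_k(x)-u_k(y)|^{p}K(x-y)\,dx\,dy$ with $\|u_k\|_{X_0}^{p}$, so this quantity is uniformly bounded; part 1 of Lemma \ref{l2} then gives $\|u_k\|_{W^{\al,p}(\Om)}\leq c(\theta)\|u_k\|_{X}$, and since $\|\cdot\|_X$ is equivalent to $\|\cdot\|_{X_0}$ on $X_0$ (again Lemma \ref{l4}), the restrictions $u_k|_{\Om}$ form a bounded sequence in $W^{\al,p}(\Om)$.

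Next, because $\Om$ is a bounded domain with Lipschitz boundary it is an extension domain for $W^{\al,p}$, so the embedding $W^{\al,p}(\Om)\hookrightarrow L^{m}(\Om)$ is compact for every $m\in[1,p^{*})$ (the fractional analogue of the Rellich--Kondrachov theorem; see \cite{hic}). Hence, passing to a subsequence, there is a function $u$ with $u_k\ra u$ strongly in $L^{m}(\Om)$ and, after a further subsequence, $u_k\ra u$ a.e.\ in $\Om$. I would then extend $u$ by zero on $\mb R^n\setminus\Om$; since every $u_k$ vanishes a.e.\ on $\mb R^n\setminus\Om$ by the definition of $X_0$, one has $\|u_k-u\|_{L^{m}(\mb R^n)}=\|u_k-u\|_{L^{m}(\Om)}$, so the convergence transfers from $\Om$ to all of $\mb R^n$.

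To make the conclusion hold for the whole range $m\in[1,p^{*})$ along a single subsequence, I would fix the a.e.\ limit $u$ obtained above and upgrade the convergence by an equi-integrability argument: Lemma \ref{l3} provides a uniform bound $\|u_k\|_{L^{p^{*}}(\mb R^n)}\leq C$, whence $\|u\|_{L^{p^{*}}}\leq C$ by Fatou, and for any measurable set $E$ H\"older's inequality gives $\int_{E}|u_k-u|^{m}\leq |E|^{1-m/p^{*}}(2C)^{m}$. This is uniformly small as $|E|\ra 0$, and since all functions are supported in the bounded set $\Om$, Vitali's convergence theorem converts the a.e.\ convergence into $L^{m}(\mb R^n)$ convergence for each $m\in[1,p^{*})$.

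The only genuine input is the compactness in the fractional setting, so the main obstacle---were one to prove it from scratch rather than cite \cite{hic}---would be verifying the Fr\'echet--Kolmogorov precompactness criterion: boundedness and tightness in $L^{m}$ are immediate from the support in $\Om$ and the $L^{p^{*}}$ bound, but controlling the $L^{p}$-modulus of translation $\sup_k\|u_k(\cdot+h)-u_k\|_{L^{p}}\ra 0$ as $h\ra 0$ directly from the Gagliardo seminorm bound requires the standard but delicate truncation and mollification estimates of fractional Sobolev theory. Everything else is routine.
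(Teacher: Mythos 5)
Your proposal is correct and follows essentially the same route as the paper: boundedness in $W^{\al,p}(\Om)$ via Lemmas \ref{l2} and \ref{l4}, the compact embedding $W^{\al,p}(\Om)\hookrightarrow L^{m}(\Om)$ for $m\in[1,p^*)$ on the Lipschitz (hence extension) domain $\Om$ from \cite{hic}, and extension by zero to $\mb R^n$. The additional Vitali/equi-integrability step you include to get one subsequence working for every $m$ simultaneously is a detail the paper leaves implicit, not a different method.
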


\begin{proof}
As $\{u_k\}$ is bounded in $X_0$,  by Lemmas \ref{l2} and \ref{l4},
$\{u_k\}$ is bounded in $W^{\al,p}(\Om)$ and also in $L^{p}(\Om)$. Then
by assumption on $\Om$ and [4, Corollary 7.2], there exists $u\in
L^{m}(\Om)$ such that up to a subsequence $u_k\ra u$ in $
L^{m}(\Om)$ as $k\ra\infty$ for any $m\in[1,p^*)$. Since $u_k=0$ on
$\mb R^n\setminus \Om$, we can define $u:=0$ in $\mb R^n\setminus
\Om$ and we get $u_k\ra u$ in $ L^{m}(\mb R^n)$.\QED
\end{proof}
\section{Nehari manifold and Fibering map analysis for \eqref{eq01}}
\setcounter{equation}{0} \noi The Euler functional $J_{\la}: X_0 \ra
\mb R$ associated to the problem \eqref{eq01} is defined as
\[J_{\la}(u)= \frac{1}{p}\int_{Q}|u(x)-u(y)|^p K(x-y) dxdy -\frac{\la}{q+1}\int_{\Om}h(x)|u|^{q+1} dx -\frac{1}{r+1}\int_{\Om} b(x) |u|^{r+1}.\]
 Then $J_{\la}$ is Fr$\acute{e}$chet differentiable  and
\begin{align*}
\ld J_{\la}^{\prime}(u),v
\rd=\int_{Q}&|u(x)-u(y)|^{p-2}(u(x)-u(y))(v(x)-v(y)) K(x-y)dxdy\\
&\quad- \la \int_{\Om} h(x)|u|^{q-1} u v dx -  \int_{\Om}b(x)
|u|^{r-1}u v dx,
\end{align*}
which shows that the weak solutions of \eqref{eq01} are
critical points of the functional $J_{\la}$.

\noi It is easy to see that the energy functional $J_{\la}$ is not
bounded below on the space $X_0$, but is bounded below on an
appropriate subset of $X_0$ and a minimizer on subsets of this set
gives raise to solutions of \eqref{eq01}. In order to obtain the
existence results, we introduce the Nehari manifold
\begin{equation*}
\mc N_{\la}:= \left\{u\in X_0: \ld J_{\la}^{\prime}(u),u\rd=0
\right\},
\end{equation*}
where $\ld\;,\; \rd$ denotes the duality between $X_0$ and its dual
space. Therefore $u\in \mathcal N_{\la}$ if and only if
\begin{equation}\label{eq2}
\int_{Q} |u(x)-u(y)|^p K(x-y) dxdy - \la \int_{\Om}h(x) |u|^{q+1}
dx- \int_{\Om} b(x)|u|^{r+1}dx =0 .
\end{equation}
We note that $\mathcal N_{\la}$ contains every non zero solution of
\eqref{eq01}. Now as we know that the Nehari manifold is closely
related to the behavior of the functions $\phi_u: \mb R^+\ra \mb R$
defined as $\phi_{u}(t)=J_{\la}(tu)$. Such maps are called fiber
maps and were introduced by Drabek and Pohozaev in \cite{DP}. For
$u\in X_0$, we have
\begin{align*}
\phi_{u}(t) &= \frac{t^p}{p} \|u\|_{X_0}^p- \frac{\la
 t^{q+1}}{q+1}\int_{\Om} h(x)|u|^{q+1} dx - \frac{t^{r+1}}{r+1}\int_{\Om} b(x) |u|^{r+1} dx ,\\
\phi_{u}^{\prime}(t) &= t^{p-1}\|u\|_{X_0}^{p}- {\la
 t^{q}}\int_{\Om} h(x) |u|^{q+1} dx  - t^r\int_{\Om} b(x) |u|^{r+1} dx,\\
\phi_{u}^{\prime\prime}(t) &= (p-1)t^{p-2}\|u\|_{X_0}^p- q \la
 t^{q-1} \int_{\Om} h(x) |u|^{q+1} dx - r t^{r-1}\int_{\Om} b(x) |u|^{r+1} dx.
\end{align*}
Then it is easy to see that $tu\in \mathcal N_{\la}$ if and only if
$\phi_{u}^{\prime}(t)=0$ and in particular, $u\in \mc N_{\la}$ if
and only if $\phi_{u}^{\prime}(1)=0$. Thus it is natural to split
$\mathcal N_{\la}$ into three parts corresponding to local minima,
local maxima and points of inflection. For this we set
\begin{align*}
\mathcal N_{\la}^{\pm}&:= \left\{u\in \mc N_{\la}:
\phi_{u}^{\prime\prime}(1)
\gtrless0\right\} =\left\{tu\in X_0 : \phi_{u}^{\prime}(t)=0,\; \phi_{u}^{''}(t)\gtrless  0\right\},\\
\mathcal N_{\la}^{0}&:= \left\{u\in \mc N_{\la}:
\phi_{u}^{\prime\prime}(1) = 0\right\}=\left\{tu\in X_{0} :
\phi_{u}^{\prime}(t)=0,\; \phi_{u}^{''}(t)= 0\right\}.
\end{align*}

\noi Before studying the behavior of Nehari manifold using fibering
maps, we introduce some notations
\begin{align*}
B^{\pm}:=\{u\in X_0: \int_{\Om} b(x)|u|^{r+1} dx\gtrless 0\}, &\;\; B_{0}:=\{u\in X_0: \int_{\Om} b(x)|u|^{r+1} dx= 0\},\\
H^{\pm}:=\{u \in X_0: \int_{\Om} h(x)|u|^{q+1} dx \gtrless 0\},
&\;\; H_{0}:=\{u\in X_0: \int_{\Om} h(x)|u|^{q+1} dx= 0\},
\end{align*}
 and $H^{\pm}_{0}: =H^{\pm}\cup H_{0}$, $B^{\pm}_{0}: =
B^{\pm}\cup B_{0}$.

\noi Now we study the fiber map $\phi_{u}$ according to the sign of
$\int_{\Om} h(x)|u|^{q+1} dx$ and $\int_{\Om} b(x)|u|^{r+1} dx$.

\noi Case 1: $u\in H^{-}\cap B^{-}$.\\
In this case $\phi_{u}(0)=0$, $\phi_{u}^{\prime}(t)>0$ $\fa$ $t>0$
which implies that $\phi_{u}$ is strictly increasing and hence no
critical point.

\noi Case 2: $u\in H^{-}\cap B^{+}$.\\
In this case, firstly we define $m_{u}: \mb R^{+} \lra \mb R$ by
\begin{equation*}
m_{u}(t)= t^{p-1-q}\|u\|_{X_0}^p-  t^{r-q}\int_{\Om} b(x) |u|^{r+1}
dx.
\end{equation*}
Clearly, for $t>0$, $tu\in \mc N_{\la}$ if and only if $t$ is a
solution of
 \[m_{u}(t)={\la} \int_{\Om} h(x) |u|^{q+1} dx.\]
 As we have $m_{u}(t)\ra -\infty$ as $t\ra \infty$ and
\begin{align*}\label{eq01}
m_{u}^{\prime}(t)&= (p-1-q) t^{p-2-q} \|u\|_{X_0}^p -
(r-q)t^{r-1-q}\int_{\Om} b(x) |u|^{r+1} dx.
\end{align*}
Therefore $m_{u}^{\prime}(t)>0$ as $t\ra 0$. Since $u\in H^{-}$,
there exists $t_*(u)$ such that $m_{u}(t_*)={\la}\int_{\Om}
h(x)|u|^{q+1} dx .$ Thus for $0<t<t_*$, $\phi_{u}^{'}(t)=
t^{q+2}(m_{u}(t)-{\la} \int_{\Om} h(x) |u|^{q+1} dx )>0$ and for
$t>t_*$, $\phi_{u}^{'}(t)<0$. Hence $\phi_u$ is increasing on
$(0,t_*)$, decreasing on $(t_*, \infty)$. Since $\phi_{u}(t)>0$ for
$t$ close to $0$ and $\phi_{u}(t)\ra -\infty$ as $t\ra \infty$, we
get $\phi_{u}$ has exactly one critical point $t_{1}(u)$, which is a
global maximum point. Hence $t_{1}(u)u \in \mc N^{-}_{\la}$.

\noi Case 3: $u\in H^{+}\cap B^{-}$.\\
In this case $m_{u}(0)=0$, $m_{u}^{\prime}(t)>0$ $\fa$ $t>0$ which
implies that $m_{u}$ is strictly increasing and since  $u\in
H^+$, there exists a unique $t_1=t_{1}(u)>0$ such that
$m_u(t_1)=\la\int_{\Om}h(x) |u|^{q+1} dx$. This implies that
$\phi_u(t)$ is decreasing on $(0,t_1)$, increasing on $(t_1,
\infty)$ and $\phi_{u}^{\prime}(t_1)=0$. Thus $\phi_{u}$ has exactly
one critical point $t_{1}(u)$, corresponding to global minimum
point. Hence $t_{1}(u)u \in \mc N^{-}_{\la}$.

\noi Case 4: $u\in H^{+}\cap B^+$.

\noi In this case, we claim that there exists $\la_0>0$
such that for $\la\in(0,\la_0)$, $\phi_u$ has exactly two
critical points $t_1(u)$ and $t_2(u)$.
 Moreover, $t_1(u)$ is a local minimum
point and $t_2(u)$ is a local maximum point. Thus $t_1(u)u\in\mc
N_{\la}^{+}$ and $t_2(u)u\in\mc N_{\la}^{-}$.

We prove this claim in the following Lemma:
\begin{Lemma}\label{le02}
There exists $\la_0>0$ such that $\la<\la_0$, $\phi_u$ takes
positive value for all non-zero $u\in X_0$.  Moreover, if $\la<\la_0$ and $u\in H^{+}\cap B^+$ then $\phi_u$ has
exactly two critical points.
\end{Lemma}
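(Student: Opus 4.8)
The plan is to exploit the scaling identity $\phi_{cu}(t)=\phi_u(ct)$, which shows that both claims---taking a positive value, and the number of critical points---are invariant under $u\mapsto cu$. Hence I may assume throughout that $\|u\|_{X_0}=1$, and I abbreviate $C:=\int_{\Om}h(x)|u|^{q+1}\,dx$ and $B:=\int_{\Om}b(x)|u|^{r+1}\,dx$, so that $\phi_u(t)=\frac{t^p}{p}-\frac{\la C}{q+1}t^{q+1}-\frac{B}{r+1}t^{r+1}$. First I would record, since $1<q+1<p^*$ and $1<r+1<p^*$, that Lemma \ref{l3} together with the boundedness of $\Om$ gives continuous embeddings $X_0\hookrightarrow L^{q+1}(\Om)$ and $X_0\hookrightarrow L^{r+1}(\Om)$; consequently $C\leq C_{\max}:=\|h\|_\infty S_{q+1}^{q+1}$ and $B\leq B_{\max}:=\|b\|_\infty S_{r+1}^{r+1}$ whenever $\|u\|_{X_0}=1$, where $S_{q+1},S_{r+1}$ are the embedding constants.

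For the first assertion the idea is to test $\phi_u$ at the maximiser $t^\ast=B^{-1/(r-p+1)}$ of the $\la$-free part $\psi(t):=\frac{t^p}{p}-\frac{B}{r+1}t^{r+1}$ when $B>0$, for which a direct computation yields $\psi(t^\ast)=c_1B^{-p/(r-p+1)}$ with $c_1=\frac{r+1-p}{p(r+1)}>0$. I would then split into cases. If $B\leq 0$ then $\phi_u(t)\geq\frac{t^p}{p}-\frac{\la C}{q+1}t^{q+1}\to+\infty$ (as $p>q+1$), so $\phi_u$ is eventually positive; if $B>0$ and $C\leq 0$ then $\phi_u(t^\ast)\geq\psi(t^\ast)>0$; and in the remaining case $B>0,\ C>0$ one has $\phi_u(t^\ast)=c_1B^{-p/(r-p+1)}-\frac{\la C}{q+1}B^{-(q+1)/(r-p+1)}$, which is positive precisely when $\la<\frac{(q+1)c_1}{C}B^{-(p-q-1)/(r-p+1)}$. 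Setting
$$\la_0:=\frac{(q+1)c_1}{C_{\max}}\,B_{\max}^{-(p-q-1)/(r-p+1)}>0$$
and using $B\leq B_{\max}$, $C\leq C_{\max}$ together with the negative exponent $-(p-q-1)/(r-p+1)$ carried by $B$, the $u$-dependent threshold on the right is bounded below by $\la_0$, so for every $\la<\la_0$ and every non-zero $u$ the map $\phi_u$ attains a positive value. The main obstacle is exactly this uniformity: ensuring the threshold stays bounded away from $0$ over all normalised $u$, which works only because the Sobolev estimates point in the right direction (upper bounds on $B$ and $C$) and $B$ enters with a negative power.

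For the second assertion let $u\in H^{+}\cap B^{+}$, so $B,C>0$, and I would pin the count of critical points from both sides. For the lower bound I use the shape of $\phi_u$: one has $\phi_u(0)=0$, while near $t=0^{+}$ the lowest power dominates ($q+1<p<r+1$) so $\phi_u(t)\sim-\frac{\la C}{q+1}t^{q+1}<0$; the first assertion supplies some $t_p$ with $\phi_u(t_p)>0$; and $\phi_u(t)\sim-\frac{B}{r+1}t^{r+1}\to-\infty$. Since $\phi_u$ decreases from $0$ into negative values, later becomes positive, and finally tends to $-\infty$, it must possess at least one interior local minimum followed by one interior local maximum. For the upper bound I recall from the Case 4 analysis that $\phi_u'(t)=t^{q}\bigl(m_u(t)-\la C\bigr)$ for $t>0$, where $m_u(t)=t^{p-1-q}-B\,t^{r-q}$ is strictly increasing up to its unique critical point and strictly decreasing thereafter; hence $m_u(t)=\la C$ has at most two positive roots and $\phi_u$ at most two critical points. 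Combining the two bounds gives exactly two, and the sign of $m_u'$ at the roots (equivalently $\phi_u''$) identifies the smaller root as a local minimum lying in $\mc N_{\la}^{+}$ and the larger as a local maximum lying in $\mc N_{\la}^{-}$, as claimed.
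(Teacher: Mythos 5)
Your proof is correct and follows essentially the same route as the paper: you evaluate $\phi_u$ at the maximiser of the $\lambda$-free part, use the Sobolev embedding to make the resulting positivity threshold uniform in $u$, and count critical points through the rising-then-falling auxiliary function $m_u$. Your normalisation $\|u\|_{X_0}=1$ is just a repackaging of the paper's scale-invariant ratio $\bigl(\int_Q|u(x)-u(y)|^pK\,dx\,dy\bigr)^{r+1}/\bigl(\int_\Om b|u|^{r+1}\,dx\bigr)^p$, and your explicit ``at least two plus at most two'' count makes precise a step the paper leaves largely implicit.
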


\begin{proof}
Let $u\in X_0$ and $\int_{\Om} b(x)|u|^{r+1}dx >0$, define
\[F_{u}(t):= \frac{t^p}{p}\int_{Q}|u(x)-u(y)|^p K(x-y)dxdy -\frac{t^{r+1}}{r+1}\int_{\Om}b(x)|u|^{r+1}dx. \]
Then
\[F_{u}^{\prime}(t)= t^{p-1} \int_{Q}|u(x)-u(y)|^p K(x-y)dxdy - t^r \int_{\Om}b(x)|u|^{r+1}dx \]
and $F_{u}$ attains its maximum value at $t_*=
\left(\frac{\int_{Q}|u(x)-u(y)|^p
K(x-y)dxdy}{\int_{\Om}b(x)|u|^{r+1}dx}\right)^{\frac{1}{r-p+1}}.$ Moreover, \[F_u(t^*)=
\left(\frac{1}{p}-\frac{1}{r+1}\right)\left[\frac{(\int_{Q}|u(x)-u(y)|^p
K(x-y)dxdy)^{r+1}}{(\int_{\Om}b(x)|u|^{r+1}dx)^p}\right)^{\frac{1}{r-p+1}}\]
and\[F_{u}^{\prime\prime}(t^*)= (p-r-1)\frac{(\int_{Q}|u(x)-u(y)|^p
K(x-y)dxdy)^{\frac{r-1}{r-p+1}}}{\left(\int_{\Om}b(x)|u|^{r+1}dx\right)^{\frac{p-2}{r-p+1}}}<0.\]

\noi Let $S_{r+1}$ be the Sobolev constant of embedding
$W^{\al,p}(\mb R^n)\hookrightarrow L^{r+1}(\mb R^n)$, then
\[\|u\|_{L^{r+1}(\Om)}=\|u\|_{L^{r+1}(\mb R^n)}\leq S_{r+1}\|u\|_{W^{\al,p}(\mb R^n)}.\]
By Lemmas \ref{l2} and \ref{l4},
\[\|u\|_{W^{\al,p}(\mb R^n)}\leq C(\theta)\|u\|_{X_0}= M \|u\|_{X_0}.\]
Combining above two inequalities we get,
\[\frac{1}{ (M S_{r+1})^{p(r+1)}}\leq \frac{(\int_{Q}|u(x)-u(y)|^p
K(x-y)dxdy)^{r+1}}{(\int_{\Om}|u|^{r+1}dx)^p}.\]

\noi Hence \[F_{u}(t^*)\geq
\frac{r-p+1}{p(r+1)}\left(\frac{1}{\|b^{+}\|^{p}_{\infty} (M
S_{r+1})^{p(r+1)}}\right)^{\frac{1}{r-p+1}}= \de,\] which is
independent of $u$. We now show that there exists $\la_0>0$ such that
$\phi_u(t^*)>0$. Using  Sobolev embedding of fractional spaces we get,
\begin{align*}
\frac{t_{*}^{q+1}}{q+1}  &\int_{\Om} h(x)|u|^{q+1}dx \\
&\leq \frac{1}{q+1} \|h\|_{\infty}
(MS_{q+1})^{q+1}\left(\frac{\int_{Q}|u(x)-u(y)|^p
K(x-y)dxdy}{\int_{\Om}b(x)|u|^{r+1}dx}\right)^{\frac{q+1}{r-p+1}}\|u\|^{q+1}\\
&= \frac{1}{q+1} \|h\|_{\infty}
(MS_{q+1})^{q+1}\left[\frac{(\int_{Q}|u(x)-u(y)|^p
K(x-y)dxdy)^{r+1}}{(\int_{\Om}b(x)|u|^{r+1}dx)^p}\right]^{\frac{q+1}{p(r-p+1)}}\\
&=\frac{1}{q+1} \|h\|_{\infty}
(MS_{q+1})^{q+1}\left(\frac{p(r+1)}{r-p+1}\right)^{\frac{q+1}{p}}
\left(F_{u}(t_*)\right)^{\frac{q+1}{p}}= c
F_{u}(t_*)^{\frac{q+1}{p}},
\end{align*}
where $c$ is a constant independent of $u$. Thus
\begin{align*}
\phi_{u}(t_*)&\geq F_u(t_*)-\la c F_{u}(t_*)^{\frac{q+1}{p}}
 = F_{u}(t_*)^{\frac{q+1}{p}}(F_u(t_*)^{\frac{p-1-q}{p}}-\la c)
 \geq \de^{\frac{q+1}{p}} (\de^{\frac{p-1-q}{p}} -\la c).
\end{align*}
Let $\la<\frac{\de^{\frac{p-1-q}{p}}}{c}=\la_0$. Then choice of such
$\la$ completes the proof.
 \QED
\end{proof}
\begin{Corollary}\label{co01}
If $\la<\la_0$, then there exists $\de_1>0$ such that
$J_{\la}(u)\geq \de_1$ for all $u\in \mc N_{\la}^{-}$.
\end{Corollary}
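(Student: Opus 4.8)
The plan is to identify $J_\la(u)$ with the value $\phi_u(1)$ of the fiber map (since $u\in\mc N_\la$ forces $\phi_u'(1)=0$), to show that every $u\in\mc N_\la^-$ automatically lies in $B^+$, and then to bound $\phi_u(1)$ below by the $u$-independent quantity produced in Lemma \ref{le02}. Concretely, I would prove $\phi_u(1)\geq\phi_u(t_*)\geq\de^{(q+1)/p}\big(\de^{(p-1-q)/p}-\la c\big)=:\de_1$, where $t_*$ is the maximizer of $F_u$ and $\de,c$ are the constants of Lemma \ref{le02}; for $\la<\la_0$ this $\de_1$ is strictly positive and independent of $u$.

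First I would show $\mc N_\la^-\subseteq B^+$. For $u\in\mc N_\la^-$ the Nehari identity $\phi_u'(1)=0$ gives $\la\int_\Om h(x)|u|^{q+1}\,dx=\|u\|_{X_0}^p-\int_\Om b(x)|u|^{r+1}\,dx$. Substituting this into the inequality $\phi_u''(1)<0$, namely $(p-1)\|u\|_{X_0}^p-q\la\int_\Om h(x)|u|^{q+1}\,dx-r\int_\Om b(x)|u|^{r+1}\,dx<0$, the $h$-term cancels and one is left with $(p-1-q)\|u\|_{X_0}^p<(r-q)\int_\Om b(x)|u|^{r+1}\,dx$. Since $q<p-1$, $q<r$ and $u\neq0$ (so $\|u\|_{X_0}>0$), the left-hand side is positive, forcing $\int_\Om b(x)|u|^{r+1}\,dx>0$, i.e. $u\in B^+$.

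Because $u\in B^+$, the function $F_u$ and its maximizer $t_*$ of Lemma \ref{le02} are defined, and exactly the computation carried out there yields $\phi_u(t_*)\geq\de_1>0$ for $\la<\la_0$, with $\de_1$ independent of $u$. It remains to compare $\phi_u(1)$ with $\phi_u(t_*)$. Writing $\phi_u'(t)=t^{q}\big(m_u(t)-\la\int_\Om h(x)|u|^{q+1}\,dx\big)$, a direct differentiation gives $\phi_u''(1)=m_u'(1)$, so $\phi_u''(1)<0$ means $t=1$ lies past the unique maximum of $m_u$, on its decreasing branch. Since $m_u(t_*)=0$ while $m_u(1)=\la\int_\Om h(x)|u|^{q+1}\,dx$, the sign of $\int_\Om h(x)|u|^{q+1}\,dx$ fixes the order of $1$ and $t_*$: if $\int_\Om h(x)|u|^{q+1}\,dx\geq0$ then $t_*\geq1$ and, as $m_u$ is decreasing beyond $1$, $\phi_u'<0$ on $(1,\infty)$, giving $\phi_u(1)\geq\phi_u(t_*)$; if $\int_\Om h(x)|u|^{q+1}\,dx<0$ then $t_*\leq1$ with $\phi_u$ increasing on $(0,1)$, again giving $\phi_u(1)\geq\phi_u(t_*)$ (in this last case one may instead note $\phi_u(t_*)\geq F_u(t_*)\geq\de\geq\de_1$). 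In all cases $J_\la(u)=\phi_u(1)\geq\phi_u(t_*)\geq\de_1$, which is the assertion.

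I expect the delicate point to be the comparison $\phi_u(1)\geq\phi_u(t_*)$: it rests on locating the local maximum $t=1$ of $\phi_u$ relative to the maximizer $t_*$ of $F_u$, which forces the monotonicity analysis of $m_u$ and a short split according to the sign of $\int_\Om h(x)|u|^{q+1}\,dx$. The remaining care is only bookkeeping, namely checking that the $\de_1$ produced by Lemma \ref{le02} is genuinely uniform over $\mc N_\la^-$, which holds since the constants $\de$ and $c$ there do not depend on $u$.
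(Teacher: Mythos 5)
Your proof is correct and follows essentially the same route as the paper: both reduce the claim to the uniform lower bound $\phi_u(t_*)\geq \de^{(q+1)/p}(\de^{(p-1-q)/p}-\la c)$ from Lemma \ref{le02} and then compare $J_\la(u)=\phi_u(1)$ with $\phi_u(t_*)$. The only difference is that you carefully justify two points the paper merely asserts, namely that $\mc N_\la^-\subseteq B^+$ (via the cancellation of the $h$-term between $\phi_u'(1)=0$ and $\phi_u''(1)<0$) and that $\phi_u(1)\geq\phi_u(t_*)$ regardless of the sign of $\int_\Om h(x)|u|^{q+1}\,dx$ (the paper incorrectly claims this integral is always positive on $\mc N_\la^-$ and writes an equality $\phi_u(1)=\phi_u(t_*)$), so your version is in fact the more rigorous one.
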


\begin{proof}
Let $u\in \mc N_{\la}^{-}$, then $\phi_{u}$ has a positive global
maximum at $t=1$ and $\int_{\Om} h(x)|u|^{q+1}dx >0$. Thus
\begin{align*}
J_{\la}(u)=\phi_u(1)&= \phi_{u}(t_*)\\
&\geq F_{u}(t_*)^{\frac{q+1}{p}}(F_u(t_*)^{\frac{p-1-q}{p}}-\la c)\\
 &\geq \de^{\frac{q+1}{p}} (\de^{\frac{p-1-q}{p}} -\la c)>0 \;\mbox{if}\;
 \la<\la_0,
 \end{align*}
 where $\de$ is same as in Lemma \ref{le02}, and hence the
 result.\QED
\end{proof}
\begin{Corollary}
If $0<\la<\la_0$, then $\mc N_{\la}^{0}=\emptyset$.
\end{Corollary}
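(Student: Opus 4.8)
The plan is to proceed by contradiction, turning the membership $u\in\mc N_\la^0$ into the two scalar identities $\phi_u'(1)=0$ and $\phi_u''(1)=0$ and showing these are incompatible with the fiber-map picture obtained in Lemma \ref{le02}. First I would fix $0<\la<\la_0$, assume some $u\in\mc N_\la^0$ (so $u\neq 0$), and abbreviate $A=\|u\|_{X_0}^p$, $B=\la\int_\Om h(x)|u|^{q+1}\,dx$ and $C=\int_\Om b(x)|u|^{r+1}\,dx$. From the formulas for $\phi_u'$ and $\phi_u''$ the conditions $\phi_u'(1)=\phi_u''(1)=0$ become
\begin{align*}
A-B-C&=0,\\
(p-1)A-qB-rC&=0,
\end{align*}
a linear system whose unique solution is $B=\frac{r-p+1}{r-q}\,A$ and $C=\frac{p-1-q}{r-q}\,A$.

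Next I would read off the signs. Since $0<q<p-1<r$, each of $r-p+1$, $p-1-q$ and $r-q$ is strictly positive, and $A=\|u\|_{X_0}^p>0$; hence $B>0$ and $C>0$, i.e. $\int_\Om h(x)|u|^{q+1}\,dx>0$ and $\int_\Om b(x)|u|^{r+1}\,dx>0$. Thus every element of $\mc N_\la^0$ lies in $H^+\cap B^+$, placing us squarely in Case 4. Now I would invoke Lemma \ref{le02}: for $\la<\la_0$ and $u\in H^+\cap B^+$ the fiber map $\phi_u$ has exactly two critical points $t_1(u)<t_2(u)$ with $t_1(u)u\in\mc N_\la^+$ and $t_2(u)u\in\mc N_\la^-$, so that $\phi_u''(t_1)>0$ and $\phi_u''(t_2)<0$ and neither is degenerate. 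But $u\in\mc N_\la^0$ says precisely that $t=1$ is a critical point of $\phi_u$ with $\phi_u''(1)=0$; since the only critical points are $t_1$ and $t_2$, necessarily $1\in\{t_1,t_2\}$, forcing $\phi_u''(1)$ to equal one of the nonzero numbers $\phi_u''(t_1),\phi_u''(t_2)$, a contradiction. This yields $\mc N_\la^0=\emptyset$.

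The only computational step that carries content is the linear solve pinning $u$ into Case 4; everything after it is a direct appeal to the structure of $\phi_u$ already established. The point I would be most careful about is that Lemma \ref{le02} delivers two \emph{nondegenerate} critical points (encoded by $t_1(u)u\in\mc N_\la^+$ and $t_2(u)u\in\mc N_\la^-$), since it is exactly this nondegeneracy that rules out the degenerate critical point that membership in $\mc N_\la^0$ would produce. An essentially equivalent route would work through $m_u$: at any critical point one has $\phi_u''(t)=t^q m_u'(t)$, so $\phi_u''(1)=0$ forces $t=1$ to be the maximum of $m_u$, i.e. the tangency level $\la\int_\Om h(x)|u|^{q+1}\,dx=\max m_u$ at which $\phi_u$ has a single critical point — again contradicting the two-critical-point conclusion of Lemma \ref{le02}.
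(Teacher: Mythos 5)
Your proof is correct. The paper states this corollary with no proof at all, leaving it as an implicit consequence of the fiber-map case analysis and Lemma \ref{le02}, so your write-up actually supplies the detail the authors omit. The genuinely useful step you add is the linear solve of $\phi_u'(1)=\phi_u''(1)=0$ showing that any $u\in\mc N_\la^0$ must satisfy $\la\int_\Om h|u|^{q+1}\,dx>0$ and $\int_\Om b|u|^{r+1}\,dx>0$, i.e.\ $\mc N_\la^0\subseteq H^+\cap B^+$; without this observation one would have to check Cases 1--3 separately (where the unique critical point, when it exists, is also nondegenerate, but the paper never says so explicitly). One caution about your first route: the statement of Lemma \ref{le02} literally asserts only ``exactly two critical points,'' not their nondegeneracy, so strictly speaking you are importing the extra claim $t_1(u)u\in\mc N_\la^+$, $t_2(u)u\in\mc N_\la^-$ from the Case 4 discussion rather than from the lemma itself. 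Your alternative route through $m_u$ avoids this entirely: since $\phi_u''(t)=t^{q}m_u'(t)$ at any critical point, a degenerate critical point forces tangency of the level $\la\int_\Om h|u|^{q+1}\,dx$ with the unique maximum of $m_u$, which yields exactly one critical point of $\phi_u$ and contradicts the lemma as stated. That second argument is the airtight one and I would lead with it.
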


\noi In the following lemma we show that the minimizers on subsets of
$\mc N_{\la}$ are  solutions of $\eqref{eq01}$.
 \begin{Lemma}\label{le30}
Let $u$ be a local minimizer for $J_{\la}$ on subsets $\mc{N_\la}^{+}$ or $\mc{N_{\la}}^{-}$ of $\mc
N_{\la}$ such that $u\notin \mc N_{\la}^{0}$, then $u$ is a critical
point for $J_{\la}$.
\end{Lemma}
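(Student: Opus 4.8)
The plan is to treat $u$ as a constrained minimizer of $J_{\la}$ subject to the constraint defining $\mc N_{\la}$, and to extract a Lagrange multiplier which must then vanish. Introduce the $C^1$ functional $\Psi:X_0\ra\mb R$ by $\Psi(u)=\ld J_{\la}^{\prime}(u),u\rd$, so that $\mc N_{\la}=\Psi^{-1}(0)$ and, explicitly,
\[\Psi(u)=\|u\|_{X_0}^{p}-\la\int_{\Om}h(x)|u|^{q+1}dx-\int_{\Om}b(x)|u|^{r+1}dx.\]
Since $u$ is a local minimizer of $J_{\la}$ on $\mc N_{\la}^{+}$ (or $\mc N_{\la}^{-}$), it is in particular a local minimizer of $J_{\la}$ restricted to $\mc N_{\la}$, so the Lagrange multiplier rule provides some $\mu\in\mb R$ with $J_{\la}^{\prime}(u)=\mu\,\Psi^{\prime}(u)$ in $X_0^{*}$.

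The next step is to pair this identity with $u$ itself. Because $u\in\mc N_{\la}$ we have $\ld J_{\la}^{\prime}(u),u\rd=0$, whence $\mu\,\ld\Psi^{\prime}(u),u\rd=0$. A direct differentiation gives
\[\ld\Psi^{\prime}(u),u\rd=p\|u\|_{X_0}^{p}-\la(q+1)\int_{\Om}h(x)|u|^{q+1}dx-(r+1)\int_{\Om}b(x)|u|^{r+1}dx,\]
and substituting the Nehari relation $\|u\|_{X_0}^{p}=\la\int_{\Om}h(x)|u|^{q+1}dx+\int_{\Om}b(x)|u|^{r+1}dx$ to eliminate the norm term shows that this quantity equals exactly $\phi_{u}^{\prime\prime}(1)$. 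Since $u\notin\mc N_{\la}^{0}$ we have $\phi_{u}^{\prime\prime}(1)\neq0$, hence $\ld\Psi^{\prime}(u),u\rd\neq0$, and therefore $\mu=0$. This forces $J_{\la}^{\prime}(u)=0$, i.e. $u$ is a critical point of $J_{\la}$, as claimed.

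The step requiring the most care is the justification of the multiplier rule itself, which needs $\mc N_{\la}$ to be a genuine $C^1$ manifold near $u$, equivalently $\Psi^{\prime}(u)\neq0$ in $X_0^{*}$. This is guaranteed by the very computation above: the nonvanishing of $\phi_{u}^{\prime\prime}(1)=\ld\Psi^{\prime}(u),u\rd$ already forces $\Psi^{\prime}(u)\neq0$, so $\Psi$ is a submersion at $u$ and the implicit function theorem applies. Thus the hypothesis $u\notin\mc N_{\la}^{0}$ serves a double purpose: it both makes $\mc N_{\la}$ a manifold near $u$, legitimizing the multiplier, and forces that multiplier to vanish. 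The remaining verifications, namely that $\Psi$ is $C^1$ and that the displayed formula for $\ld\Psi^{\prime}(u),u\rd$ holds, are routine consequences of the Fr\'echet differentiability of $J_{\la}$ recorded earlier.
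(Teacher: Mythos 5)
Your proof is correct and follows essentially the same route as the paper: introduce the constraint functional $\Psi=I_{\la}$, invoke the Lagrange multiplier rule, pair with $u$, identify $\ld\Psi'(u),u\rd$ with $\phi_u''(1)$ on $\mc N_{\la}$, and conclude $\mu=0$. You are in fact a bit more careful than the paper, since you justify that the multiplier rule is applicable by observing that $\phi_u''(1)\neq 0$ already forces $\Psi'(u)\neq 0$, a point the paper leaves implicit.
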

\proof Since  $u$ is a minimizer for $J_{\la}$
under the constraint $I_{\la}(u):=\ld J_{\la}^{\prime}(u),u\rd =0$, by the theory of Lagrange multipliers, there exists $\mu \in
\mb R$ such that $ J _{\la}^{\prime}(u)= \mu I_{\la}^{\prime}(u)$.
Thus $\ld J_{\la}^{\prime}(u),u\rd= \mu\;\ld
I_{\la}^{\prime}(u),u\rd = \mu \phi_{u}^{\prime\prime}(1)$=0, but
$u\notin \mc N_{\la}^{0}$ and so $\phi_{u}^{\prime\prime}(1) \ne 0$.
Hence $\mu=0$ completes the proof.\QED
\begin{Lemma}
$J_{\la}$ is coercive and bounded below on $\mc N_{\la}$.
\end{Lemma}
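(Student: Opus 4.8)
The plan is to show that the functional $J_\la$ is both coercive and bounded below when restricted to the Nehari manifold $\mc N_\la$. The key idea is to use the Nehari constraint \eqref{eq2} to eliminate one of the lower-order terms from the expression for $J_\la(u)$, leaving an expression dominated by the highest-power term $\|u\|_{X_0}^p$.

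First I would start from the definition of $J_\la$ and, for $u\in\mc N_\la$, substitute the constraint relation. Since $u\in\mc N_\la$ means $\int_Q|u(x)-u(y)|^p K(x-y)\,dxdy = \la\int_\Om h(x)|u|^{q+1}dx + \int_\Om b(x)|u|^{r+1}dx$, I can use this to replace the $\int_\Om b(x)|u|^{r+1}dx$ term in $J_\la(u)$. Concretely, solving the constraint for $\int_\Om b(x)|u|^{r+1}dx = \|u\|_{X_0}^p - \la\int_\Om h(x)|u|^{q+1}dx$ and substituting yields
\begin{align*}
J_\la(u) &= \left(\frac{1}{p}-\frac{1}{r+1}\right)\|u\|_{X_0}^p -\la\left(\frac{1}{q+1}-\frac{1}{r+1}\right)\int_\Om h(x)|u|^{q+1}dx.
\end{align*}
Because $0<q<p-1<r$, both coefficients $\frac{1}{p}-\frac{1}{r+1}$ and $\frac{1}{q+1}-\frac{1}{r+1}$ are strictly positive, so the leading term has the right sign.

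Next I would bound the remaining integral term. Using $\int_\Om h(x)|u|^{q+1}dx \le \|h\|_\infty\|u\|_{L^{q+1}(\Om)}^{q+1}$, together with the Sobolev embedding $W^{\al,p}(\mb R^n)\hookrightarrow L^{q+1}(\mb R^n)$ and the norm equivalence from Lemmas \ref{l2} and \ref{l4}, I obtain a bound of the form $\int_\Om h(x)|u|^{q+1}dx \le C\|u\|_{X_0}^{q+1}$ for a constant $C$ independent of $u$. Substituting gives
\[
J_\la(u)\ge \left(\frac{1}{p}-\frac{1}{r+1}\right)\|u\|_{X_0}^p -\la C'\|u\|_{X_0}^{q+1},
\]
where $C'$ absorbs the constants. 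Since $q+1<p$, the term $\|u\|_{X_0}^p$ dominates $\|u\|_{X_0}^{q+1}$ as $\|u\|_{X_0}\to\infty$, which immediately gives coercivity. Boundedness below then follows because the right-hand side, as a function of the single real variable $t=\|u\|_{X_0}\ge 0$ of the form $At^p - Bt^{q+1}$ with $A,B>0$ and $p>q+1$, attains a finite global minimum on $[0,\infty)$.

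I do not expect any serious obstacle here: this is the standard concave-convex coercivity argument, and the exponent ordering $0<q<p-1<r$ is exactly what makes it work. The only point requiring mild care is making sure the constant $C$ in the Sobolev estimate is genuinely uniform over $u\in\mc N_\la$, which is guaranteed by the embedding constant $S_{q+1}$ and the norm-equivalence constant $M$ already appearing in the proof of Lemma \ref{le02}; these depend only on $n$, $\al$, $p$, $\theta$ and $\Om$, not on $u$.
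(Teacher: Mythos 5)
Your proposal is correct and follows exactly the same route as the paper: substitute the Nehari constraint to get $J_\la(u)=\left(\frac{1}{p}-\frac{1}{r+1}\right)\|u\|_{X_0}^p-\la\left(\frac{1}{q+1}-\frac{1}{r+1}\right)\int_\Om h(x)|u|^{q+1}dx$, bound the second term by $C\|u\|_{X_0}^{q+1}$ via the Sobolev embedding, and conclude from $q+1<p$. You simply spell out the constants and the sign checks that the paper leaves implicit.
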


\begin{proof}On $\mc N_{\la}$,
\begin{align*}
J_{\la}(u)=& \left(\frac{1}{p} -\frac{1}{r+1}\right)\|u\|^{p}- \la
\left(\frac{1}{q+1}-\frac{1}{r+1}\right)\int_{\Om}
h(x)|u|^{q+1} dx\\
\geq& c_1\|u\|_{X_0}^p - c_2 \|u\|_{X_0}^{q+1}.
\end{align*}
Hence $J_\la$ is bounded below and coercive on $\mc N_{\la}$.
\end{proof}
\section{Existence of solutions}
In this section, we show the existence of minimizers in $\mc
N_{\la}^{+}$ and $\mc N_{\la}^{-}$ for $\la\in(0,\la_0)$.

\begin{Lemma}\label{le31}
If $\la<\la_0$, then $J_{\la}$ achieve its minimum on $\mc
N_{\la}^{+}$.
\end{Lemma}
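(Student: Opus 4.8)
The plan is to minimize $J_\la$ over $\mc N_\la^+$ and show the infimum is attained. Write $d:=\inf_{u\in\mc N_\la^+}J_\la(u)$. First I would check $d<0$: for any $u\in H^+$ the fiber map satisfies $\phi_u'(t)=t^{p-1}\|u\|_{X_0}^p-\la t^q\int_\Om h|u|^{q+1}dx-t^r\int_\Om b|u|^{r+1}dx<0$ for $t$ small, since the term with exponent $q<p-1<r$ dominates; hence $\phi_u$ decreases near $0$ and its first critical point $t_1(u)$, at which $t_1(u)u\in\mc N_\la^+$, has $\phi_u(t_1(u))<0$. Moreover every $u\in\mc N_\la^+$ lies in $H^+$: substituting $\|u\|_{X_0}^p=\la\int_\Om h|u|^{q+1}dx+\int_\Om b|u|^{r+1}dx$ into $\phi_u''(1)>0$ yields $(r-q)\la\int_\Om h|u|^{q+1}dx>(r-p+1)\|u\|_{X_0}^p>0$, forcing $\int_\Om h|u|^{q+1}dx>0$. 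Thus $d$ is bounded above by such negative values, so $d<0$, and $d$ is finite since $J_\la$ is coercive and bounded below on $\mc N_\la$.

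Next I would take a minimizing sequence $\{u_k\}\subset\mc N_\la^+$ with $J_\la(u_k)\to d$. Coercivity makes $\{u_k\}$ bounded in $X_0$, so by reflexivity and the compact embedding $X_0\hookrightarrow L^m(\mb R^n)$ for $m\in[1,p^*)$ I may pass to a subsequence with $u_k\rightharpoonup u_0$ in $X_0$ and $u_k\to u_0$ in $L^m$. As $q+1<p<r+1<p^*$, this gives $\int_\Om h|u_k|^{q+1}dx\to\int_\Om h|u_0|^{q+1}dx$ and $\int_\Om b|u_k|^{r+1}dx\to\int_\Om b|u_0|^{r+1}dx$. If $u_0=0$, both integrals tend to $0$, and the Nehari identity forces $\|u_k\|_{X_0}\to0$, so $J_\la(u_k)\to0\ne d$; hence $u_0\ne0$. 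Passing to the limit in $(r-q)\la\int_\Om h|u_k|^{q+1}dx>(r-p+1)\|u_k\|_{X_0}^p$ and using $\liminf_k\|u_k\|_{X_0}^p\ge\|u_0\|_{X_0}^p>0$ gives $\int_\Om h|u_0|^{q+1}dx>0$, i.e. $u_0\in H^+$; in particular there is $t_1(u_0)>0$ with $t_1(u_0)u_0\in\mc N_\la^+$.

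The main step is to prove $u_k\to u_0$ strongly in $X_0$. Suppose not; by weak lower semicontinuity $\|u_0\|_{X_0}^p<\lim_k\|u_k\|_{X_0}^p$. Since $\phi_{u_k}'(1)=0$, the Nehari identity passes to the limit as $\lim_k\|u_k\|_{X_0}^p=\la\int_\Om h|u_0|^{q+1}dx+\int_\Om b|u_0|^{r+1}dx$, whence $\phi_{u_0}'(1)=\|u_0\|_{X_0}^p-\lim_k\|u_k\|_{X_0}^p<0$. Because $u_0\in H^+$, the fiber-map analysis (Cases 3 and 4) shows $\phi_{u_0}'<0$ on $(0,t_1(u_0))$; comparing the map $m_{u_0}$ with the limiting map, which differs only by a strictly larger leading coefficient and whose first critical point is $t=1$, forces $t_1(u_0)>1$, placing $1$ strictly to the left of $t_1(u_0)$. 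Then $\phi_{u_0}$ is decreasing on $[1,t_1(u_0)]$, so $J_\la(t_1(u_0)u_0)=\phi_{u_0}(t_1(u_0))<\phi_{u_0}(1)\le\liminf_k\phi_{u_k}(1)=d$, contradicting $t_1(u_0)u_0\in\mc N_\la^+$. I expect this to be the delicate point: $\phi_{u_0}'(1)<0$ is a priori also compatible with $1$ lying beyond the local maximum $t_2(u_0)$ in Case 4, and excluding this via the monotone dependence of the critical points on the leading coefficient is the crux of the argument.

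Finally, strong convergence gives $\|u_k\|_{X_0}\to\|u_0\|_{X_0}$, so passing to the limit in $\phi_{u_k}'(1)=0$ yields $\phi_{u_0}'(1)=0$, i.e. $u_0\in\mc N_\la$, and $J_\la(u_0)=\lim_k J_\la(u_k)=d$. Moreover $\phi_{u_0}''(1)=\lim_k\phi_{u_k}''(1)\ge0$, and since $\mc N_\la^{0}=\emptyset$ for $\la<\la_0$, necessarily $\phi_{u_0}''(1)>0$. Hence $u_0\in\mc N_\la^+$ and $u_0$ realizes the minimum of $J_\la$ on $\mc N_\la^+$.
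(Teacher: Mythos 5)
Your overall strategy coincides with the paper's: take a minimizing sequence, use coercivity and the compact embedding to extract a weak limit $u_0$, show $u_0\in H^{+}$, and derive a contradiction from the failure of strong convergence via the fibering maps. Your preliminary steps (that $\mc N_{\la}^{+}\subset H^{+}$, that $d<0$, that $u_0\neq 0$) are correct and in fact more carefully justified than in the paper. The one genuine gap is exactly the point you flag yourself: from $\phi_{u_0}'(1)<0$ you need $1<t_1(u_0)$, and your justification rests on the unproved assertion that $t=1$ is the \emph{first} critical point of the limiting map $\Psi(t)=\frac{t^p}{p}\lim_k\|u_k\|_{X_0}^p-\frac{\la t^{q+1}}{q+1}\int_{\Om}h|u_0|^{q+1}-\frac{t^{r+1}}{r+1}\int_{\Om}b|u_0|^{r+1}$, rather than its local maximum or a degenerate inflection. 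To close it, observe that $\Psi''(1)=\lim_k\phi_{u_k}''(1)\ge 0$. If $\Psi''(1)>0$, then $1$ is a local minimum of $\Psi$, hence (for a fibering map of $H^{+}$ type) its first critical point, and the pointwise inequality $m_{u_0}(t)<M(t)$ between the associated auxiliary maps gives $m_{u_0}(t)<M(t)\le \la\int_{\Om}h|u_0|^{q+1}$ on $(0,1]$, i.e.\ $\phi_{u_0}'<0$ there, so $t_1(u_0)>1$. If $\Psi''(1)=0$, then $M$ attains its maximum value $\la\int_{\Om}h|u_0|^{q+1}$ at $t=1$, whence $m_{u_0}<M\le\la\int_{\Om}h|u_0|^{q+1}$ on all of $(0,\infty)$ and $\phi_{u_0}$ would have no critical point at all, contradicting the fibering analysis (Case 3, or Lemma \ref{le02} when $u_0\in B^{+}$, using $\la<\la_0$). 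Without one of these observations the argument is not complete.

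The paper sidesteps this case analysis by running the comparison in the opposite direction: it evaluates $\phi_{u_k}'$ at $t_{\la}:=t_1(u_{\la})$, the first critical point of the \emph{limit's} fibering map. Since $\phi_{u_k}'(t_{\la})\to \phi_{u_{\la}}'(t_{\la})+t_{\la}^{p-1}\bigl(\lim_k\|u_k\|_{X_0}^p-\|u_{\la}\|_{X_0}^p\bigr)>0$, while $\phi_{u_k}'\le 0$ on $(0,1]$ (because $1$ is the first critical point of $\phi_{u_k}$ for every $u_k\in\mc N_{\la}^{+}\subset H^{+}$), one gets $t_{\la}>1$ immediately, and then $J_{\la}(t_{\la}u_{\la})<J_{\la}(u_{\la})<\lim_k J_{\la}(u_k)=d$ exactly as in your final step. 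So your route is salvageable, but either the supplementary argument above or the paper's reversal of roles is needed to turn it into a proof.
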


\begin{proof}
Since $J_\la$ is bounded below on $\mc N_{\la}$ and so on $\mc
N_{\la}^{+}$, there exists a minimizing sequence $\{u_k\}\subset
\mc N_{\la}^{+}$ such that
 \[\lim_{k\ra \infty}J_{\la}(u_k)= \inf_{u\in\mc N_{\la}^{+}}J_{\la}(u).\]
As $J_{\la}$ is coercive on $\mc N_{\la}$,  $\{u_k\}$ is a
bounded sequence in $X_0$. Therefore $u_k\rightharpoonup u_{\la}$
weakly in $X_0$ and $u_k \ra u_{\la}$ strongly in $L^{\al}(\mb R^n)$
for $\ds 1\leq \al<\frac{pn}{n-ps}$.

\noi If we choose $u\in X_0$ such that $\ds \int_{\Om} h(x)|u|^{q+1}
dx>0$, then there exist $t_1>0$ such that $t_1(u)u\in \mc
N_{\la}^{+}$ and $J_{\la}(t_{1}(u)u)<0$ and hence $\ds \inf_{u\in\mc
N_{\la}^{+}}J_{\la}(u)<0$. Now on $\mc N_{\la}$,
\[J_{\la}(u_k)= \left(\frac{1}{p}-\frac{1}{r+1}\right)\|u_k\|_{X_0}^p - \la\left(\frac{1}{q+1}-\frac{1}{r+1}\right)\int_{\Om} h(x)|u_k|^{q+1}dx\]
and so
\[\la\left(\frac{1}{q+1}-\frac{1}{r+1}\right)\int_{\Om} h(x)|u_k|^{q+1}dx =  \left(\frac{1}{p}-\frac{1}{r+1}\right)\|u_k\|^p- J_{\la}(u_k).\]
Letting $k \ra \infty$, we get $\int_{\Om} h(x)|u_\la|^{q+1} dx>0$.
Next we claim that $u_k \ra u_{\la}$. Suppose this is not true then
\[\int_{Q}|u_{\la}(x)-u_{\la}(y)|^p K(x-y) dxdy
<\liminf_{k\ra\infty} \int_{Q}|u_{k}(x)-u_{k}(y)|^p K(x-y) dxdy.\]
\noi Thus as
\[\phi_{u_k}^{\prime}(t) = t^{p-1}\|u_k\|_{X_0}^{p}- {\la
 t^{q}}\int_{\Om} h(x) |u_k|^{q+1} dx  - t^r\int_{\Om} b(x) |u_k|^{r+1}dx,\]
 and
\[ \phi_{u_{\la}}^{\prime}(t) = t^{p-1}\|u_{\la}\|_{X_0}^{p}- {\la t^{q}}\int_{\Om} h(x) |u_{\la}|^{q+1}dx - t^r\int_{\Om} b(x)
|u_{\la}|^{r+1}dx.\]
 It follows that $\phi_{u_k}^{\prime}(t_{\la}(u_\la))>0$ for sufficiently
large $k$. So, we must have $t_{\la}>1$ but $t_{\la}(u_{\la})
u_{\la}\in\mc N^{+}_{\la}$ and so
\[J_{\la}(t_{\la}(u_{\la}) u_{\la}) < J_{\la}(u_{\la})< \lim_{k\ra \infty} J_{\la}(u_k)= \inf_{u\in\mc N_{\la}^{+}}J_{\la}(u).\]
which is a contradiction. Hence we must have $u_k\ra u_{\la}$ in
$X_0$, and so $u_{\la}\in \mc N_{\la}^{+}$, since $\mc
N_{\la}^{0}=\emptyset$. Hence $u_{\la}$ is a minimizer for $J_{\la}$
on $\mc N_{\la}^{+}$.
 \QED
\end{proof}
\begin{Lemma}\label{le32}
If $\la<\la_0$, then $J_{\la}$ achieve its minimum on $\mc
N_{\la}^{-}$.
\end{Lemma}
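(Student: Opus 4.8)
The plan is to mirror the argument for $\mc N_\la^+$ in Lemma \ref{le31}, but now working at the local \emph{maximum} points of the fibering maps and exploiting the uniform positive lower bound from Corollary \ref{co01}. Since $J_\la$ is bounded below and coercive on $\mc N_\la$, I would start from a minimizing sequence $\{u_k\}\subset\mc N_\la^{-}$ with $J_\la(u_k)\to c:=\inf_{\mc N_\la^{-}}J_\la$, where $c\ge\de_1>0$ by Corollary \ref{co01}. Coercivity forces $\{u_k\}$ to be bounded in $X_0$, so after passing to a subsequence $u_k\rightharpoonup u_\la$ weakly in $X_0$, $u_k\ra u_\la$ strongly in $L^m(\mb R^n)$ for every $m\in[1,p^*)$, and $u_k\ra u_\la$ a.e. Because $r+1<p^*$ and $q+1<p<p^*$, the weight integrals converge: $\int_\Om h(x)|u_k|^{q+1}dx\ra\int_\Om h(x)|u_\la|^{q+1}dx$ and $\int_\Om b(x)|u_k|^{r+1}dx\ra\int_\Om b(x)|u_\la|^{r+1}dx$.

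The first key step is to show $u_\la\in B^{+}$, which guarantees that $\phi_{u_\la}$ has a genuine local maximum. Rewriting the energy on $\mc N_\la$ as
\[
J_\la(u_k)=\Big(\frac1p-\frac1{q+1}\Big)\|u_k\|_{X_0}^{p}+\Big(\frac1{q+1}-\frac1{r+1}\Big)\int_\Om b(x)|u_k|^{r+1}dx,
\]
and noting the first coefficient is negative while the second is positive, the bound $J_\la(u_k)\ge\de_1$ together with $\|u_k\|_{X_0}^{p}\ge0$ yields $\int_\Om b(x)|u_k|^{r+1}dx\ge\de_1\big(\frac1{q+1}-\frac1{r+1}\big)^{-1}>0$ uniformly in $k$. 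Passing to the limit gives $\int_\Om b(x)|u_\la|^{r+1}dx>0$, so $u_\la\in B^{+}$ (in particular $u_\la\ne0$). By the analysis of Cases 2 and 4 and the fact that $\la<\la_0$, there is then a unique $t_2=t_2(u_\la)>0$ with $t_2u_\la\in\mc N_\la^{-}$, and $t=1$ is the global maximum of $\phi_{u_k}$ on $[0,\infty)$ for each $u_k\in\mc N_\la^{-}$.

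The final step is to upgrade weak to strong convergence by contradiction. Suppose $u_k\not\ra u_\la$ in $X_0$; then by weak lower semicontinuity of the norm, $\|u_\la\|_{X_0}^{p}<\liminf_k\|u_k\|_{X_0}^{p}$ strictly. Since the lower-order weight terms converge strongly, this gives $\phi_{u_\la}(t_2)<\liminf_k\phi_{u_k}(t_2)$. Using that $t=1$ maximizes $\phi_{u_k}$ for $u_k\in\mc N_\la^{-}$, we have $\phi_{u_k}(t_2)\le\phi_{u_k}(1)=J_\la(u_k)$, hence $\phi_{u_\la}(t_2)<\liminf_k J_\la(u_k)=c$. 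But $t_2u_\la\in\mc N_\la^{-}$ forces $\phi_{u_\la}(t_2)=J_\la(t_2u_\la)\ge c$, a contradiction. Therefore $u_k\ra u_\la$ strongly in $X_0$; passing to the limit in $\phi_{u_k}^{\prime}(1)=0$ and $\phi_{u_k}^{\prime\prime}(1)<0$, and using $\mc N_\la^{0}=\emptyset$ since $\la<\la_0$, shows $u_\la\in\mc N_\la^{-}$ with $J_\la(u_\la)=c$, so the minimum is achieved.

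I expect the delicate point to be this strong convergence step: specifically, justifying the strict inequality $\phi_{u_\la}(t_2)<\liminf_k\phi_{u_k}(t_2)$ and ensuring the maximizing level $t=1$ of the sequence really dominates $\phi_{u_k}(t_2)$. Establishing $u_\la\in B^{+}$ beforehand is what makes $t_2(u_\la)$ available and is the linchpin that prevents the weak limit from degenerating; without the uniform lower bound from Corollary \ref{co01} the limit could slip onto $B_0$ and destroy the maximum structure.
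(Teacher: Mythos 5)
Your proposal is correct and follows essentially the same route as the paper: the same energy decomposition on $\mc N_{\la}$ combined with the lower bound $\de_1$ from Corollary \ref{co01} to force $\int_\Om b(x)|u_\la|^{r+1}dx>0$, the same use of the global maximum point $t_2(u_\la)$ (the paper's $\tilde t$) of $\phi_{u_\la}$, and the same contradiction via strict weak lower semicontinuity against $\phi_{u_k}(t_2)\le\phi_{u_k}(1)$. No substantive differences.
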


\begin{proof}
Let $u\in \mc N_{\la}^{-}$ then from corollary \ref{co01}, we have
$J_{\la}(u)\geq \de_1$. So there exists a minimizing sequence
$\{u_k\}\subset \mc N_{\la}^{-}$ such that
 \[\lim_{k\ra \infty}J_{\la}(u_k)= \inf_{u\in\mc N_{\la}^{-}}J_{\la}(u)>0.\]
Since $J_{\la}(u_k)$ is coercive, $\{u_k\}$ is a bounded sequence
in $X_0$. Therefore $u_k\rightharpoonup u_{\la}$ weakly in $X_0$ and
$u_k \ra u_{\la}$ strongly in $L^{\al}$ for $1\leq
\al<\frac{np}{n-ps}$.
\[J_{\la}(u_k)= \left(\frac{1}{p}-\frac{1}{q+1}\right)\|u_k\|_{X_0}^p + \left(\frac{1}{q+1}-\frac{1}{r+1}\right)\int_{\Om} b(x)|u_k|^{r+1}dx.\]
Since $\ds \lim_{k\ra \infty}J_{\la}(u_k)>0$ and

\[ \lim_{k\ra\infty}\int_{\Om} b(x)|u_k|^{r+1} dx = \int_{\Om} b(x)|u_{\la}|^{r+1}
dx,\]
 we must have $\int_{\Om} b(x)|u_{\la}|^{r+1} dx>0$. Hence $\phi_{u_{\la}}$ has a global maximum at some point $\tilde{t}$ so that
$\tilde{t}(u_{\la}) u_{\la}\in \mc N_{\la}^{-}$. On the other hand,
$u_k\in \mc N_{\la}^{-}$ implies that $1$ is a global maximum point
for $\phi_{u_k}\; i.e. \; \phi_{u_k}(t)\leq \phi_{u_k}(1)$ for every
$t>0$. Thus we have
\begin{align*}
J_{\la}&(\tilde{t}(u_{\la}) u_{\la})\\&= \frac{1}{p}
(\tilde{t}(u_{\la}))^p\|u_{\la}\|^{p}_{X_0}- \frac{\la
 (\tilde{t}(u_{\la}))^{q+1}}{q+1}\int_{\Om} h(x) |u_{\la}|^{q+1} dx  - \frac{(\tilde{t}(u_{\la}))^{r+1}}{r+1}\int_{\Om} b(x)
 |u_{\la}|^{r+1}dx,\\
&<\liminf_{k\ra \infty}\left(\frac{1}{p}
(\tilde{t}(u_{\la}))^{p}\|u_{k}\|_{X_0}^{p}- \frac{\la
(\tilde{t}(u_{\la}))^{q+1}}{q+1}\int_{\Om} h |u_k|^{q+1}dx   -
\frac{(\tilde{t}(u_{\la}))^{r+1}}{r+1}\int_{\Om} b
 |u_k|^{r+1}dx\right),\\
&\leq\lim_{k\ra\infty}J_{\la}(\tilde{t}(u_{\la}) u_k)\leq
\lim_{k\ra \infty} J_{\la}(u_k)= \inf_{u\in \mc N^{-}_{\la}}
J_{\la}(u),
\end{align*}
which is a contradiction. Hence $u_k \ra u_{\la}$ and moreover
$u_{\la}\in \mc N^{-}_{\la}$, since $\mc N^{0}_{\la}=\emptyset$.\QED
\end{proof}

\noi Next, we prove the existence of non-negative solutions, for
this we first define some notations.
\[F_{+}=\int_{0}^{t}f_{+}(x,s) ds,\]
where $$ f_{+}(x,t)= \left\{
\begin{array}{lr}
f(x,t)\quad\mbox{if}\quad t\geq0\\
 0\quad\quad\quad\mbox{if}\quad t<0\\
\end{array}
\right.
$$
Let $J_{\la}^{+}(u)=\|u\|^{p}_{X_0} - \int_{\Om}F_{+}(x,u) dx $.
Then the functional $J_{\la}^{+}(u)$ is well defined and it is
Fr$\acute{e}$chet differentiable in $u\in X_0$ and for any $v\in
X_0$
\begin{align}\label{s1}
\ld J_{\la}^{\prime +}(u),v\rd= \int_{Q}
|u(x)-u(y)|^{p-2}&(u(x)-u(y))(v(x)-v(y))K(x-y) dxdy\notag\\
&\quad \quad\quad\quad\quad-\int_{\Om} f_{+}(x,u) v dx.
\end{align}
If $f(x,t):= \la h(x) |t|^{q-1}t +b(x) |t|^{r-1}t$. Then
$J_{\la}^{+}(u)$ satisfies all the above Lemmas. So for $\la\in
(0,\la_0)$ there exists two non-trivial critical points $u_{\la}\in
\mc N_{\la}^{+}$ and $v_{\la}\in \mc N_{\la}^{-}$.

\noi Now we claim that both $u_{\la}$ and $v_\la$ are non-negative in $\mb R^{n}$.
Take $v=u^{-}$ in \eqref{s1}, then
\begin{align*}
0=& \ld J_{\la}^{\prime +}(u), u^{-}\rd\\
=&\int_{Q} |u(x)-u(y)|^{p-2}(u(x)-u(y))(u^{-}(x)-u^{-}(y))K(x-y) dxdy\\
=&\int_{Q} |u(x)-u(y)|^{p-2}((u^{-}(x)-u^{-}(y))^2 + 2 u^{-}(x)u^{+}(y))K(x-y) dxdy\\
\geq& \int_{Q} |u^{-}(x)-u^{-}(y)|^{p}K(x-y) dxdy\\
=&\|u^{-}\|^{p}_{X_{0}}
\end{align*}
Thus $\|u^{-}\|_{X_{0}}=0$ and hence $u= u^{+}$. So by taking
$u=u_{\la}$ and $u=v_{\la}$ respectively, we get the non-negative
solutions of \eqref{eq01}.

{\bf {Proof of Theorem \ref{t1}}} Lemmas \ref{le31}, \ref{le32},
\ref{le30} and above discussion complete the proof.



\begin{thebibliography}{21}
\footnotesize

\bibitem{ag} Adimurthi and Giacomoni, Jacques, {\it Multiplicity of positive solutions
for a singular and critical elliptic problem in $\mb R^2$}, Commun.
Contemp. Math. 8 (2006),  no. 5, 621-656.

\bibitem {ABC} A. Ambrosetti, H. Brezis and G. Cerami,
{\it Combined effects of concave and convex nonlinearities in some
elliptic problems}, J. Funct. Anal. 122 no. 2 (1994) 519-543.
\bibitem{AAP}A. Ambrosetti, J. Garc\'{\i}a Azorero and I. Peral,
{\it Multiplicity results for some nonlinear elliptic equations}, J.
Funct. Anal. 137 no. 1 (1996) 219-242.
\bibitem{GA} G. A. Afrouzi, S. Mahdavi and Z.Naghizadeh, {\it The
Nehari Manifold for p-Laplacian equation with Dirichlet boundry
condition}, Nonlinear Analysis: Modeling and Control, 12 (2007)
143-155.

\bibitem{COA} C. O. Alves and A. El Hamidi, {\it Nehari manifold and
existence of positive solutions to a class of quasilinear problem},
Nonlinear Anal. 60, no. 4 (2005),  611-624.

\bibitem{tan} X. Cabre and J. Tan, {\it Positive solutions of nonlinear problems
involving the square root of the Laplacian}, Adv. Math. 224 (2010)
2052-2093.

\bibitem{cs} L. Caffarelli, L. Silvestre, {\it An extension problem related to the
fractional Laplacian}, Comm. Partial Differential Equations 32
(2007) 1245-160.

\bibitem{hic} E. Di Nezza, G. Palatucci and E. Valdinoci,{\it Hitchhiker�s guide to
the fractional Sobolev spaces}, Bull. Sci. Math., 136 (2012)
225-236.

\bibitem{DP} P. Drabek and S. I. Pohozaev, {\it Positive solutions for the
p-Laplacian: application of the fibering method}, Proc. Royal Soc.
Edinburgh Sect A, 127 (1997) 703-726.

\bibitem{gf} Giovanni Franzina and Giampiero Palatucci, {\it Fractional
p-eigenvalues}, To appear in Riv. Mat. Univ. Parma (N.S.) available
at http://arxiv.org/pdf/1307.1789v1.pdf.

\bibitem{WU} T. F. Wu, {\it On semilinear elliptic equations involving concave-convex
nonlinearities and sign-changing weight function}, J. Math. Anal.
Appl., 318 (2006) 253-270.
\bibitem{WU6} T.F. Wu, {\it A semilinear elliptic problem involving nonlinear
boundary condition and sign-changing potential}, Electron. J.
Differential Equations 131 (2006) 1-15.
\bibitem{WU9} T.F. Wu, {\it Multiplicity
results for a semilinear elliptic equation involving sign-changing
weight function}, Rocky Mountain J. Math. 39 (3) (2009) 995-1011.
\bibitem{WU10} T.F. Wu, {\it Multiple positive solutions for a class of
concave-convex elliptic problems in $\mb R^n$ involving
sign-changing weight}, J. Funct. Anal. 258 (1) (2010) 99-131.
\bibitem{WUFI} K. J. Brown and T. F. Wu, {\it A fibering map approach to a semilinear
elliptic boundry value problem}, Electronic Journal of Differential
Equations, 69 (2007) 1-9.

\bibitem{pt} E. Lindgren and P. Lindqvist, {\it Fractional eigenvalues}, Calc. Var.
Partial Differential Equations (2013). DOI: 10.1007/s00526-013-0600-1.

\bibitem{mp} R. Servadei and E. Valdinoci, {\it Mountain pass solutions
for non-local elliptic operators,} J. Math. Anal. Appl. 389 (2012)
887-898.

\bibitem{var} R. Servadei and E. Valdinoci, {\it Variational methods for non-local operators of elliptic type,}
 Discrete Contin. Dyn. Syst., 33  no. 5 (2013) 2105-2137

\bibitem{ls} R. Servadei and E. Valdinoci, {\it Lewy-Stampacchia type estimates for
variational inequalities driven by non-local operators}, to appear
in Rev. Mat. Iberoam., 29 (2013).

\bibitem{weak} R. Servadei and E. Valdinoci, {\it Weak and Viscosity
of the fractional Laplace equation,} to appear.
%
%

\bibitem{mul} Xifeng Su and Yuanhong Wei, {\it Multiplicity of
solutions for non-local elliptic equations driven by fractional
Laplacian}, available at www.ma.utexas.edu/mp$_{}$arc/c/12/12-102.

\bibitem{TA} G. Tarantello, {\it On nonhomogeneous elliptic equations involving critical Sobolev exponent},
Ann. Inst. H. Poincare- Anal. non lineaire, 9 (1992) 281-304.

\bibitem{yu} Xiaohui Yu, {\it The Nehari manifold for elliptic equation involving the square root of the
Laplacian}, J. Differential Equations, 252 (2012) 1283-1308.

\end{thebibliography}
\end{document}